\newtheorem{theorem}{Theorem}[section]
\newtheorem{lemma}[theorem]{Lemma}
\newtheorem{corollary}[theorem]{Corollary}
\newtheorem{conjecture}[theorem]{Conjecture}
\newcommand{\bR}{\mathbb R}
\newcommand{\cU}{\mathcal{U}}
\DeclareMathOperator{\si}{si}
\DeclareMathOperator{\cl}{cl}
\DeclareMathOperator{\PG}{PG}
\newcommand{\elem}{\epsilon}
\newcommand{\del}{\setminus}
\newcommand{\con}{/}
\begin{document}

\sloppy
\title[Matroids with no $U_{2,n}$-minor]{The number of points in a
matroid with no $n$-point line as a minor}

\author[Geelen]{Jim Geelen}

\author[Nelson]{Peter Nelson}
\address{Department of Combinatorics and Optimization,
University of Waterloo, Waterloo, Canada}
\thanks{ This research was partially supported by a grant from the
Natural Sciences and Engineering Research Council of Canada.}

\subjclass{05B35}
\keywords{matroids, growth rate, minors}
\date{\today}

\begin{abstract}
For any positive integer $l$ we prove that if $M$ is 
a simple matroid with no $(l+2)$-point line as a minor and with
sufficiently large rank, then 
$|E(M)|\le \frac{q^{r(M)}-1}{q-1}$, where $q$ is the largest 
prime power less than or equal to $l$.
Equality is attained by projective geometries over GF$(q)$.
\end{abstract}

\maketitle

\section{Introduction}

Kung~[\ref{kung}] proved the following theorem.
\begin{theorem} \label{line}
For any integer $l\ge 2$, if $M$ is a simple matroid with 
no $U_{2,l+2}$-minor, then $|E(M)|\le \frac{l^{r(M)}-1}{l-1}$.
\end{theorem}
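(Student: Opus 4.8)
The plan is to argue by induction on $r(M)$. The base cases are immediate: a simple matroid of rank $0$ has empty ground set, and a simple matroid of rank $1$ has a single element, so in both cases $|E(M)| = \frac{l^{r(M)}-1}{l-1}$.

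For the inductive step, assume $E(M)\neq\emptyset$ and fix an element $e\in E(M)$. The first observation is that the rank-$2$ flats of $M$ containing $e$ partition $E(M)\setminus\{e\}$: every $f\neq e$ lies in $\cl_M(\{e,f\})$, which is such a flat (it has rank $2$ because $M$ is simple), while any two distinct rank-$2$ flats through $e$ meet in a flat of rank at most $1$ containing $e$, hence in exactly $\{e\}$. The second observation bounds the size of each such flat: if a rank-$2$ flat $L$ had $|L|\ge l+2$, then, since $M|L$ is simple of rank $2$, it would equal $U_{2,|L|}$, and deleting elements would give a $U_{2,l+2}$-restriction of $M$, contrary to hypothesis. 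Hence $|L\setminus\{e\}|\le l$ for every rank-$2$ flat $L$ through $e$.

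Finally I would identify the rank-$2$ flats of $M$ through $e$ with the points of $\si(M/e)$: since $M$ is simple, $M/e$ is loopless, and the parallel class of an element $f$ of $M/e$ is precisely $\cl_M(\{e,f\})\setminus\{e\}$, so the number of rank-$2$ flats of $M$ through $e$ equals $|E(\si(M/e))|$. Now $\si(M/e)$ is a simple matroid of rank $r(M)-1$, and every minor of it is a minor of $M$, so it has no $U_{2,l+2}$-minor and the induction hypothesis applies to it. Combining the three facts,
\[
|E(M)| \;=\; 1 + \sum_{L} |L\setminus\{e\}| \;\le\; 1 + l\,|E(\si(M/e))| \;\le\; 1 + l\cdot\frac{l^{r(M)-1}-1}{l-1} \;=\; \frac{l^{r(M)}-1}{l-1},
\]
where $L$ ranges over the rank-$2$ flats of $M$ containing $e$.

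I do not anticipate a serious obstacle: the only steps needing care are the bijection between lines through $e$ and points of $\si(M/e)$, and the verification that $\si(M/e)$ still has no $U_{2,l+2}$-minor, both of which are routine consequences of the definitions of contraction and simplification. One could instead phrase the entire argument inside $\si(M/e)$ without naming flats of $M$, but the partition-into-lines picture makes the key inequality most transparent.
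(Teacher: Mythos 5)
The paper does not prove Theorem~\ref{line}; it is quoted from Kung~[\ref{kung}] as a known result, so there is no in-paper argument to compare against. Your proof is correct and is the standard one (essentially Kung's original argument): the three steps you isolate --- that the lines through a fixed point $e$ partition $E(M)\setminus\{e\}$, that each such line carries at most $l$ points besides $e$ because a longer line would be a $U_{2,l+2}$-restriction, and that these lines are in bijection with the points of $\si(M/e)$, to which induction applies --- are exactly the right ingredients, and the recursion $\elem(M) \le 1 + l\,\elem(\si(M/e))$ closes the induction. Indeed, the same one-step recursion (with $q$ in place of $l$) is used verbatim in the paper's proof of Theorem~\ref{extra2}, so your argument is very much in the spirit of the surrounding material.
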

The above bound is tight in the case that $l$ is a prime power
and $M$ is a projective geometry. In fact, among matroids of rank at least
$4$, projective geometries are
the only matroids that attain the bound; see~[\ref{kung}].
Therefore, the bound is not tight when $l$ is not a prime power. 
We prove the following bound that was conjectured by 
Kung~[\ref{kung},\ref{gkw}].
\begin{theorem}\label{main}
Let $l\ge 2$ be a positive integer and let $q$ be the
largest prime power less than or equal to $l$.
If $M$ is a simple matroid with no $U_{2,l+2}$-minor
and with sufficiently large rank, then 
$|E(M)|\le \frac{q^{r(M)}-1}{q-1}$.
\end{theorem}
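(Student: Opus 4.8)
The plan is to prove by induction on $r$ that, for a suitable integer $N$ fixed in advance, every simple matroid $M$ with no $U_{2,l+2}$-minor and $r(M)\ge N$ satisfies $|E(M)|\le\frac{q^{r(M)}-1}{q-1}$; for the finitely many ranks below $N$ one invokes Theorem~\ref{line}, whose bound is a constant once the rank is bounded. We may assume $q<l$, since otherwise the statement is exactly Theorem~\ref{line}; then $q\ge 3$, so the largest prime power $q'$ with $q'<q$ exists. A preliminary step, which I would deduce from the Growth Rate Theorem of Geelen, Kung and Whittle, is: \emph{for every integer $n$ there is $r_0(n)$ so that any simple matroid $M$ with no $U_{2,l+2}$-minor, with $r(M)\ge r_0(n)$ and with $|E(M)|>\frac{q^{r(M)}-1}{q-1}$, has a $\PG(n-1,q)$-minor.} Indeed, let $\cM_n$ be the minor-closed class of matroids with no $U_{2,l+2}$-minor and no $\PG(n-1,q)$-minor. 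Since $\PG(n-1,q)$ contains a line (hence a $U_{2,q+1}$-minor) while every minor of $\PG(m-1,q')$ has line-length at most $q'+1\le q$, every geometry $\PG(m-1,q')$ lies in $\cM_n$; on the other hand, for no prime power $q''>q'$ do all $\PG(m-1,q'')$ lie in $\cM_n$ (if $q''=q$ then $\PG(m-1,q)$ has a $\PG(n-1,q)$-minor once $m\ge n$, and if $q''>q$ then $q''>l$, so $\PG(m-1,q'')$ has a $U_{2,l+2}$-minor). Thus the largest prime power $q^\ast$ with $\PG(m-1,q^\ast)\in\cM_n$ for all $m$ equals $q'$, so the Growth Rate Theorem yields a constant $c_n$ with $|E(M)|\le c_n(q')^{r(M)}$ for every simple $M\in\cM_n$; since $q'<q$ this is below $\frac{q^r-1}{q-1}$ once $r\ge r_0(n)$, and the claim follows.

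I would then reduce to a highly connected situation. Suppose the claim fails for every $N$, and let $M$ be a counterexample of smallest rank, so $r(M)\ge N$ and $|E(M)|>\frac{q^{r(M)}-1}{q-1}$. If $M$ is not vertically $k$-connected for some bounded $k$, there is a partition $(A,B)$ of $E(M)$ with $r_M(A)+r_M(B)-r(M)<k$ and $k\le r_M(A),r_M(B)<r(M)$. Setting $M_1=M|\cl_M(A)$ and $M_2=M|\cl_M(B)$, we have $E(M)=E(M_1)\cup E(M_2)$, $r(M_1)+r(M_2)<r(M)+k$, and neither $M_i$ has a $U_{2,l+2}$-minor. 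Bounding $|E(M_i)|$ by the inductive hypothesis when $r(M_i)\ge N$ and by the (constant) bound of Theorem~\ref{line} otherwise, and using that $q^{r_1}+q^{r_2}\le q^{r}$ whenever $r_1,r_2<r$, $r_1+r_2<r+k$ and $r$ is large relative to $k$, one gets $|E(M)|\le|E(M_1)|+|E(M_2)|<\frac{q^{r(M)}-1}{q-1}$, a contradiction. So $M$ is vertically $k$-connected for every bounded $k$, and in particular, by the preliminary step, $M$ has a $\PG(n-1,q)$-minor for an $n$ as large as we wish.

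The heart of the proof is to turn this projective-geometry minor, together with the high vertical connectivity of $M$, into global projective structure. Using results on realising projective-geometry minors of highly connected matroids as restrictions — possibly after contracting a bounded set — I would obtain a restriction $R$ of $M$ that is a restriction of $\PG(r(R)-1,q)$, with $r(R)$ within a bounded amount of $r(M)$, spanning a flat of $M$ in which $R$ is as dense as $\mathrm{GF}(q)$-geometry permits. The goal is then to show that every point of $M$ carries $\mathrm{GF}(q)$-coordinates consistent with $R$: a point $e$ that did not would lie on an $M$-line meeting $R$ in a full $(q+1)$-point line of the geometry, and by contracting suitably chosen flats of the dense geometry one could lengthen this line to at least $l+2$ points, producing a forbidden $U_{2,l+2}$-minor. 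Thus $M$ would be $\mathrm{GF}(q)$-representable, hence a restriction of $\PG(r(M)-1,q)$, forcing $|E(M)|\le\frac{q^{r(M)}-1}{q-1}$ and contradicting the choice of $M$. Equivalently, one can package the endgame as a Kung-type recursion: exhibit a point $e$ of $M$ such that every line of $M$ through $e$ has at most $q+1$ points, whence $|E(M)|-1=\sum_{L\ni e}(|L|-1)\le q\cdot\si(M/e)\le q\cdot\frac{q^{r(M)-1}-1}{q-1}$, which is exactly the projective recursion.

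I expect this last step to be the principal obstacle. Passing from a projective-geometry minor to a projective-geometry restriction in a highly connected matroid, and then arguing that a point violating the $\mathrm{GF}(q)$-structure genuinely produces a $U_{2,l+2}$-minor — rather than merely a line with somewhere between $q+2$ and $l+1$ points — appears to require a delicate, essentially Ramsey-type, analysis of how contractions in a dense $\mathrm{GF}(q)$-geometry interact with the extra point, and this is presumably where the bulk of the paper's work resides. The preliminary step likewise draws on the full strength of the Growth Rate Theorem, and the connectivity reduction requires some care to absorb bounded-rank pieces into the constant.
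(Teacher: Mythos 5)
Your high-level architecture matches the paper's: use density results to extract a projective-geometry minor, reduce to a well-connected case, and close with a Kung-type recursion ``each line through some $e$ has at most $q+1$ points''. But the proposal leaves the crucial mechanism entirely open, and the alternatives you sketch for it do not match what the paper actually does.

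The paper's engine is Theorem~\ref{longline}: a \emph{round} matroid with a $U_{2,q+2}$-restriction and a $\PG(n(q)-1,q)$-minor has a $U_{2,q^2+1}$-minor. Note the target is $q^2+1$ points on a line, far more than the $l+2$ you aim for; the authors explicitly remark that $U_{2,2q+1}$ would already suffice, so the strength of the bound is surprising and a genuine feature. The proof is short and does not resemble a Ramsey-style argument: the skew-flat lemma (Lemma~\ref{skewflat}) produces, after contraction, a flat skew to the long line carrying enough GF$(q)$-geometry that Theorem~\ref{density} yields a $\PG(2,q)$-\emph{restriction}; roundness lets one contract down to rank $3$; and then modularity of $\PG(2,q)$ forces the extra point to lie on at most one long line, so contracting it gives $q^2+1$ points on a line (Lemma~\ref{restriction2}). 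Your proposal does not contain this modularity step, which is where the entire argument is compressed. The other route you offer --- showing $M$ is GF$(q)$-representable --- is precisely Conjecture~\ref{wild} of the paper, which the authors state as open; building a proof on it would be circular or speculative.

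A second, more structural, deviation: you reduce to high vertical $k$-connectivity, whereas the paper reduces to a \emph{round restriction} (Lemmas~\ref{jim1}--\ref{jim2}). Roundness is stronger than any fixed vertical connectivity and, crucially, is preserved by contraction, which is exactly what makes the recursion in Theorem~\ref{extra2} and the repeated contractions in Theorem~\ref{longline} close up. Vertical $k$-connectivity can be destroyed by contraction, so your reduction would not directly support the subsequent minor-taking. Similarly, passing from a $\PG$-minor to a $\PG$-restriction of nearly full rank, as you propose, is a much harder ``stabilizer''-type statement and is not needed: the skew-flat lemma extracts only a small $\PG(2,q)$ restriction after a suitable contraction.

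In short: the scaffolding is recognisable, and your preliminary step (deducing a $\PG(n-1,q)$-minor from the density hypothesis via a Geelen--Kabell-type theorem) is essentially what the paper does. But the heart of the argument --- Lemma~\ref{restriction2}'s modularity trick, the $U_{2,q^2+1}$ target, and the round-restriction machinery that makes the induction contraction-stable --- is missing, and you acknowledge this yourself. As it stands the proposal is an outline with the decisive lemma absent, and the fallback you suggest (GF$(q)$-representability) is a stronger open conjecture, not a tool.
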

The case where $l = 6$ was resolved by Bonin and Kung in~[\ref{bk}].

We will also prove that the only matroids of large rank
that attain the bound in Theorem~\ref{main} are the
projective geometries over GF$(q)$; see Corollary~\ref{extreme}.

A matroid $M$ is {\em round} if $E(M)$ cannot be partitioned into
two sets of rank less than $r(M)$.
We prove Theorem~\ref{main} by reducing it to the following result.
\begin{theorem}\label{extra}
For each prime power $q$, there exists a positive integer~$n$ such that, if $M$ is a
round matroid with a PG$(n-1,q)$-minor but no $U_{2,q^2+1}$-minor, 
then $\epsilon(M) \le \frac{q^{r(M)}-1}{q-1}$.
\end{theorem}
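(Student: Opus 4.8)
The plan is to fix a large integer $n = n(q)$ and argue by induction on $r(M)$, the whole argument turning on a single well-chosen contraction. Write $r = r(M)$. Since $M$ is simple, every rank-$1$ flat of $M$ other than $\{e\}$ lies on exactly one line of $M$ through $e$, so if every such line has at most $q+1$ points then $\epsilon(M) \le 1 + q\,\epsilon(\si(M/e))$. Suppose, for a contradiction, that $\epsilon(M) > \frac{q^{r}-1}{q-1}$. If $e\in E(M)$ lies on no line with more than $q+1$ points and $\si(M/e)$ still has a $\PG(n-1,q)$-minor, then $\si(M/e)$ is round (contraction preserves roundness), has no $U_{2,q^2+1}$-minor, and has rank $r-1 \ge n$; by the inductive hypothesis $\epsilon(\si(M/e)) \le \frac{q^{r-1}-1}{q-1}$, whence $\epsilon(M)\le 1 + q\,\epsilon(\si(M/e)) \le 1 + q\cdot\frac{q^{r-1}-1}{q-1} = \frac{q^{r}-1}{q-1}$, a contradiction. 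So it suffices to treat the base case $r(M)=n$ and, when $r(M)>n$, to produce one element $e$ lying on no long line (line with more than $q+1$ points) for which $\si(M/e)$ retains a $\PG(n-1,q)$-minor.

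In the base case $M$ is round of rank $n$ with a $\PG(n-1,q)$-minor; since the minor has full rank it is a spanning restriction $M|S$ with $S\cong\PG(n-1,q)$, and it remains to show that $E(M)=S$. Suppose $e\in E(M)\setminus S$. After a preliminary contraction built from a minimal $T\subseteq S$ with $e\in\cl_M(T)$, we may assume $e$ lies on a geometric line $L$ (carrying $q+1$ points of the geometry); let $P$ be a geometric plane containing $L$. Contracting $e$ collapses the $q+1$ points of $L$ to a single parallel class but leaves the remaining $q^2$ points of $P$ in distinct parallel classes, so the image of $P$ is a rank-$2$ flat with $q^2+1$ points — a $U_{2,q^2+1}$-restriction of $M/e$, contradicting the hypothesis. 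Hence $M=\PG(n-1,q)$ and $\epsilon(M)=\frac{q^{n}-1}{q-1}$. (The degenerate case in which $e$ cannot be pushed onto a geometric line, and a bounded range of ranks just above $n$, are handled by variants of this blow-up.)

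For the inductive step, one half is routine: if $M$ has a $\PG(n-1,q)$-minor $M/C\setminus D$ with $C\neq\emptyset$ then any $e\in C$ keeps such a minor, and if every $\PG(n-1,q)$-minor of $M$ is a restriction $M|S$ spanning a rank-$n$ flat $F\subsetneq E(M)$ then contracting any element outside $F$ preserves that restriction. The difficulty is to arrange simultaneously that $e$ lies on no long line. Let $L^{\ast}$ denote the union of the long lines of $M$. The crucial structural input — and the only place where the $\PG(n-1,q)$-minor hypothesis is really used — is that in a round matroid with a sufficiently large projective-geometry minor and no $U_{2,q^2+1}$-minor the long lines are confined, in the sense that $\cl_M(L^{\ast})$ has rank bounded by a function of $q$ alone. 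Granting this, once $r(M)$ exceeds that bound plus $n$ there is room to choose $e$ outside $F\cup\cl_M(L^{\ast})$, or inside $C$ and outside $\cl_M(L^{\ast})$, and then no line through $e$ is long, so the induction closes. I expect this confinement of the long lines to be the main obstacle. Its mechanism is the same blow-up used in the base case: sufficiently many long lines lying ``independently'' of the projective geometry can be stacked by a low-rank contraction so that a single rank-$2$ flat acquires more than $q^2$ points (here one exploits that a co-line of a plane of $\PG(n-1,q)$ lies on $q^2+q+1$ lines). Turning this into a sharp bound on how much ``extra'' structure can accumulate away from the geometry is the delicate part, and is where one must combine the rigidity of $\PG(n-1,q)$ with roundness and with repeated applications of Kung's Theorem~\ref{line} to low-rank contractions, presumably leaning on the theory of projective geometries in dense matroids.
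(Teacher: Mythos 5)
Your contraction--induction framework matches the paper's, but the proof has a genuine gap at precisely the point you flag as the main obstacle. The ``crucial structural input'' you assert --- that the long lines of $M$ are confined to a flat of bounded rank --- is, up to a reformulation, exactly the paper's Theorem~\ref{longline}: in a round matroid with a $\PG(n(q)-1,q)$-minor and no $U_{2,q^2+1}$-minor, there is \emph{no} line with $q+2$ or more points at all (so $L^{\ast}$ is empty, not merely confined). This is a substantive theorem, proved in the paper by a minimum-rank-counterexample argument combining the Geelen--Kabell density theorem (Theorem~\ref{density}), the skew-flat lemma (Lemma~\ref{skewflat}), and a modularity argument inside a $\PG(2,q)$ restriction (Lemma~\ref{restriction2}); your proposal states the conclusion as a heuristic and sketches the blow-up mechanism, but does not prove it. Without it the inductive step does not close.

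Once one has Theorem~\ref{longline}, the deduction of Theorem~\ref{extra} is shorter than your outline suggests: there is no need to choose $e$ off the long lines, because there are none. One only needs a non-loop $e$ such that $M/e$ retains a $\PG(n-1,q)$-minor (available when $r(M)>n$: either the PG-minor already contracts something, or it is a non-spanning restriction and roundness supplies an $e$ outside its closure). Since every line of $M$ through $e$ has at most $q+1$ points, $\epsilon(M)\le 1+q\,\epsilon(M/e)$, and induction finishes. Your separate base case at rank $n$ is also unnecessary --- the paper handles $r(M)\le n(q)$ directly via Lemma~\ref{restriction2} --- and, as sketched, it has its own gaps: after your preliminary contraction you need that $e$ lies on no other long line of the contracted matroid so that the remaining $q^2$ points of the plane $P$ really fall into distinct parallel classes (this is the modularity step in Lemma~\ref{restriction2}), and the ``degenerate'' cases you defer (e.g.\ when $e$ depends on nearly all of a basis of $S$, leaving no room for a geometric plane after contraction) are not handled by an obvious variant. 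In short: the architecture is right, but the load-bearing lemma is missing, and the base case is incomplete.
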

For any integer $l\ge 2$,
there is an integer $k$ such that $2^{k-1}<l\le 2^k$. Therefore, 
if $q$ is the largest prime power less than or equal to $l$, then $l<2q$.
So, to prove Theorem~\ref{main}, it would suffice to prove the
weaker version of Theorem~\ref{extra} where $U_{2,q^2+1}$ is 
replaced by $U_{2,2q+1}$. With this in mind, we find the stronger
version somewhat surprising. 
 
We further reduce Theorem~\ref{extra} to the following result.
\begin{theorem}\label{longline}
For each prime power $q$ there exists an integer $n$ such that,
if $M$ is a round matroid that contains a $U_{2,q+2}$-restriction
and a PG$(n-1,q)$-minor, then $M$ contains a $U_{2,q^2+1}$-minor.
\end{theorem}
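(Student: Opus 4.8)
The plan is to pin down a small rigid configuration whose contraction is a long line, and then use roundness together with the large projective geometry minor to locate that configuration inside $M$. For the configuration, let $N_q$ be the rank-$3$ matroid obtained from $\PG(2,q)$ by adding one new point $f$ freely on a line $\ell$, so that $\ell\cup\{f\}$ is a $U_{2,q+2}$-restriction of $N_q$. I claim that $N_q/f$ has a $U_{2,q^2+1}$-restriction. Contracting $f$ collapses $\ell$ to a single parallel class, while for a point $b$ of $\PG(2,q)$ not on $\ell$ the line of $N_q$ through $f$ and $b$ contains no other point of $\PG(2,q)$, because $\ell$ is the only line of $\PG(2,q)$ whose closure in $N_q$ contains $f$. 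Hence the $q^2$ points of $\PG(2,q)$ off $\ell$, together with the image of $\ell$, are $q^2+1$ pairwise non-parallel points of the rank-$2$ matroid $N_q/f$, which therefore has a $U_{2,q^2+1}$-restriction. So it suffices to show that $M$ has a minor with a restriction isomorphic to $N_q$.

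To find such a minor, write the given $U_{2,q+2}$-restriction as $L=\{a_0,\dots,a_q,f\}$ and set $\ell=L\setminus\{f\}$, a $U_{2,q+1}$-restriction. Since contraction preserves roundness, and taking $n$ large, the reduction I aim for is to pass to a minor $M_1$ of $M$ that is still round, that still has $L$ as a $U_{2,q+2}$-restriction, and that has a spanning restriction $R\cong\PG(k-1,q)$ with $k\ge 3$ in which $\ell$ is a line (so $f\in\cl_{M_1}(\ell)$ and $f\notin E(R)$). Granting this, pick a $\PG(2,q)$-restriction $P$ of $R$ containing the line $\ell$. Since $L\cong U_{2,q+2}$ and $\cl_{M_1}(\ell)\cap E(R)=\ell$, the point $f$ is parallel to no point of $R$; and any line of $P$ other than $\ell$ passing through $f$ would meet $\ell$ in a point of $R$, which would then be parallel to $f$ --- impossible. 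Hence $M_1|(E(P)\cup\{f\})\cong N_q$, so $M$ has an $N_q$-minor and therefore, by the previous paragraph, a $U_{2,q^2+1}$-minor.

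All of the difficulty is in that reduction: extracting, from a round matroid with a large $\mathrm{GF}(q)$-projective-geometry minor, a minor in which the long line $L$ has been glued onto a spanning projective geometry along one of its lines. Roundness is exactly what makes this possible --- without it $\PG(n-1,q)\oplus U_{2,q+2}$ would be a counterexample, having a $U_{2,q+2}$-restriction and arbitrarily large projective geometry minors but no $U_{2,q^2+1}$-minor --- since roundness forbids $L$ and the geometry from lying in complementary flats and hence forces them to interact under minor operations. I would carry the reduction out in two stages. The first is the fact that a round matroid with a sufficiently large $\PG(n-1,q)$-minor has a minor with a spanning $\PG(k-1,q)$-restriction of large rank, obtained by contracting a suitable independent set --- a standard argument combining roundness with density bounds in the spirit of Theorem~\ref{line}. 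The second, and the genuinely delicate step, is a strengthening of this in which a prescribed $\mathrm{GF}(q)$-representable restriction --- here the line $\ell$ --- is placed as a line of the spanning geometry while a prescribed point $f\in\cl(\ell)$ witnessing the failure of $\mathrm{GF}(q)$-representability is carried along without being contracted or merged with another point. Protecting $f$ through these operations, and determining how large $n$ must be for that to work, is what I expect to be the main obstacle.
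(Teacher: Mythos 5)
Your key calculation --- that $N_q/f$ has a $U_{2,q^2+1}$-restriction --- is correct, and it is essentially the rank-$3$ content of the paper's Lemma~\ref{restriction2}: there, a round matroid with a $\PG(2,q)$-restriction $P$ and a $U_{2,q+2}$-restriction is contracted down to rank $3$, where one may take $E(M)=P\cup\{e\}$ with $e$ not parallel to any point of $P$; modularity of $M|P$ then forces $e$ onto at most one long line, so $\epsilon(M/e)\ge q^2+1$. So that part of your plan is sound, and the restriction to $P\cup\{e\}$ is precisely your matroid $N_q$.

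The genuine gap is the structural reduction that you yourself flag and defer: passing to a round minor $M_1$ that still carries $L$ as a $U_{2,q+2}$-restriction and has a \emph{spanning} $\PG(k-1,q)$-restriction $R$ with $\ell$ \emph{as a line of $R$} and $f\notin E(R)$. You give no argument for this, and it asks for considerably more than is needed. It is also genuinely delicate in exactly the way you anticipate: writing $N=M/C\setminus D$ with $C$ independent, minimality together with roundness forces $C\subseteq L$, so the contraction that would make a projective geometry spanning is concentrated inside $L$ and tends to destroy the very line you are trying to protect. The paper sidesteps the alignment problem entirely. Lemma~\ref{restriction2} requires only that a $\PG(2,q)$-restriction and a $U_{2,q+2}$-restriction coexist \emph{anywhere} in a round matroid, with no positional relationship between $\ell$ and the plane; contracting to rank $3$ makes the alignment automatic. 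To produce that coexisting $\PG(2,q)$-restriction, the paper applies Lemma~\ref{skewflat} to find a dense flat $F\subseteq X-D$ skew to $L$ (so $M|F=N|F$ is GF$(q)$-representable), invokes Theorem~\ref{density} to extract a $\PG(2,q)$-minor from $M|F$, and contracts a suitable $Y\subseteq F$; since $Y$ is skew to $L$, the line $L$ survives in $M/Y$, and Lemma~\ref{restriction2} finishes. Until you supply an argument of comparable strength for your intended reduction --- or relax it to the weaker coexistence statement the paper actually uses --- your proposal is a plan rather than a proof.
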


The following conjecture, if true, would imply all of the results above.
\begin{conjecture}\label{wild}
For each prime power $q$, there exists a positive integer~$n$ such that, if $M$ is a 
round matroid with a PG$(n-1,q)$-minor but no 
$U_{2,q^2+1}$-minor, then $M$ is GF$(q)$-representable.
\end{conjecture}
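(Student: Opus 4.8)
The plan is to reduce, through a sequence of contractions controlled by roundness, to a single low-rank configuration from which a $(q^2+1)$-point line is immediate, and to concentrate all of the work on producing that configuration. \emph{The endgame.} Suppose we have obtained a rank-$3$ minor $M'$ of $M$ possessing a $\PG(2,q)$-restriction $N_0$ and an element $z\in E(M')\setminus E(N_0)$ such that $z\in\cl_{M'}(\ell)$ for some line $\ell$ of $N_0$ and $z$ lies on no other line of $N_0$; the natural instance is that $M'$ has a $U_{2,q+2}$-restriction consisting of a line of $N_0$ together with the extra point $z$. I claim $M$ then has a $U_{2,q^2+1}$-minor. Work in $M'|_{E(N_0)\cup\{z\}}/z$, which has rank $2$ (the point $z$ spans nothing new): the $q+1$ points of $\ell$ collapse to a single parallel class, since any two of them are collinear with $z$, whereas for each of the $q^2$ points $p$ of $N_0$ off $\ell$ the line $\cl_{M'}(\{z,p\})$ contains no second point of $N_0$ — otherwise $z$ would lie on a line of $N_0$ distinct from $\ell$ — so the $q^2$ points off $\ell$ fall into $q^2$ distinct parallel classes. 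Hence $\si\bigl(M'|_{E(N_0)\cup\{z\}}/z\bigr)\cong U_{2,q^2+1}$. Roundness is unavoidable here: a $1$-sum of $U_{2,q+2}$ with a large $\PG(n-1,q)$ satisfies every hypothesis except roundness and has no $U_{2,q^2+1}$-minor, so any proof must use roundness to tie the long line to the geometry.

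\emph{Reaching the configuration.} Let $L$ be the $(q+2)$-point restriction and write the given geometry minor as $N=M/C\setminus D\cong\PG(n-1,q)$, so that $M/C$ is round (contraction preserves roundness) and has $N$ as a restriction. The first task is to keep the long line alive under the contraction that exposes the geometry: I would choose the geometry minor, among all sufficiently large ones, so that $r_M\bigl(C\cap\cl_M(L)\bigr)$ is as small as possible, and argue — using roundness to replace $C$ by a skew contraction set at the cost of shrinking the geometry — that one may take $C$ skew to $\cl_M(L)$, so that the image of $L$ is again a $(q+2)$-point line in the round matroid $M/C$, which still has a $\PG(n'-1,q)$-restriction with $n'$ large. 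The second task is to drag a projective plane up to $L$: using roundness (so that $\cl(L)$ cannot be separated from the geometry) together with a pigeonhole/Ramsey argument over the many lines of $\PG(n'-1,q)$, I would locate a corank-$3$ flat of $M/C$ whose contraction collapses the image of $L$ onto a line of the surviving $\PG(2,q)$ except for one point — precisely the endgame configuration, with the remnant of $L$ extending a line of the plane.

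\emph{The main obstacle.} The crux, and the source of the hypothesis that $n$ be large, is this last step: one must simultaneously cut the rank down to $3$, keep a point of the long line lying on a line of the residual plane, and not place it on a second line of that plane, all while each intermediate contraction shrinks the geometry by only a bounded amount. I expect this to require a delicate connectivity argument — passing first to a round minor in which the geometry is, in a suitable sense, spanning relative to $L$, and then exploiting the abundance of lines in a large $\PG(n'-1,q)$ to choose one along which the final contraction can be carried out without creating unwanted collinearities. By contrast, the endgame computation and the inheritance of roundness under contraction are routine; the entire difficulty lies in manoeuvring $L$ and a projective plane of the geometry into the "line-plus-a-point" position.
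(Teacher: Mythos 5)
The statement you were asked to prove is explicitly labelled a \emph{conjecture} in the paper; the authors do not prove it, and indeed remark only that it ``may hold with $n=3$'' and ``may also hold'' under a weaker connectivity hypothesis. There is therefore no paper proof to compare against, and any genuine argument here would be a new contribution beyond what the paper establishes.

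More importantly, your proposal does not actually target the conjecture. What you set out to show — that a round matroid with a $\PG(n-1,q)$-minor and a $U_{2,q+2}$-restriction has a $U_{2,q^2+1}$-minor — is precisely Theorem~\ref{longline} (proved in the paper as Theorem~\ref{longline2}), not Conjecture~\ref{wild}. The contrapositive of your claim only says that under the conjecture's hypotheses every line of $M$ has at most $q+1$ points; the conjecture demands the far stronger conclusion that $M$ is GF$(q)$-representable. These are genuinely different: a matroid can have all lines of length at most $q+1$ and still fail to be GF$(q)$-representable (non-Desarguesian planes of order $q$, or simple non-representable matroids with few points, already witness this), and nothing in your argument touches on how to coordinatize $M$ over GF$(q)$. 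So even if the ``endgame'' and the reduction to it were made fully rigorous, you would have re-derived Theorem~\ref{longline}, not the conjecture. (As a secondary remark, your endgame also implicitly assumes the auxiliary point $z$ is not parallel to any point of $N_0$; otherwise contracting $z$ identifies $z$'s parallel mate with the rest of its pencil and the $q^2$ distinct parallel classes are not guaranteed. This would need to be arranged.)
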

The conjecture may hold with $n=3$ for all $q$.
Moreover, the conjecture may also hold when 
``round" is replaced by ``vertically $4$-connected".

\section{Preliminaries}

We assume that the reader is familiar with matroid 
theory; we use the notation and terminology of Oxley~[\ref{oxley}].
A rank-$1$ flat in a matroid is referred to as a {\em point}
and a rank-$2$ flat is a {\em line}. 
A line is {\em long} if it has at least $3$ points.
The number of points in $M$ is denoted $\epsilon(M)$.

Let $M$ be a matroid and let $A,B\subseteq E(M)$.
We define $\sqcap_M(A,B) = r_M(A) + r_M(B)-r_M(A\cup B)$;
this is the {\em local connectivity} between $A$ and $B$.
This definition is motivated by geometry.
Suppose that $M$ is a restriction of PG$(n-1,q)$ and let
$F_A$ and $F_B$ be the flats of PG$(n-1,q)$ that are spanned
by $A$ and $B$ respectively. Then $F_A\cap F_B$ has rank
$\sqcap_M(A,B)$. We say that two sets $A,B\subseteq E(M)$ are {\em skew} if
$\sqcap_M(A,B)=0$.

We let $\cU(l)$ denote the class of matroids with no $U_{2,l+2}$-minor.
Our proof of Theorem~\ref{main} relies heavily on
the following result of Geelen and Kabell~[\ref{gk}, Theorem 2.1].
\begin{theorem}\label{density}
There is an integer-valued function $\alpha(l,q,n)$ such that,
for any positive integers $l,q,n$ with $l\ge q\ge 2$,
if $M\in \cU(l)$ is a matroid with $\epsilon(M)\ge \alpha(l,q,n) q^{r(M)}$,
then $M$ contains a PG$(n-1,q')$-minor for some prime-power $q'>q$.
\end{theorem}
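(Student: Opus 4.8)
\bigskip

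\noindent\emph{Proof strategy.}
I would recast the theorem in contrapositive form: for each $l\ge q\ge 2$ and each $n$ there is a constant $\alpha$ such that every $M\in\cU(l)$ having no $\PG(n-1,q')$-minor for any prime power $q'>q$ satisfies $\epsilon(M)\le\alpha q^{r(M)}$. I would prove this by induction on $r(M)$. Theorem~\ref{line} gives $\epsilon(M)\le\frac{l^{r(M)}-1}{l-1}$, so bounded ranks are immediate, and in the induction I may assume $M$ is round: if $M$ is not round, partition $E(M)$ into two sets of rank less than $r(M)$ and pass to the denser side, which spans a restriction of strictly smaller rank carrying at least $\frac12\epsilon(M)$ points; the lost factor $\frac12$ is outweighed by the drop in rank since $q\ge 2$.

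With $M$ round, of large rank, and $\epsilon(M)$ far exceeding $q^{r(M)}$: applying Theorem~\ref{line} with parameter $q$ shows $M\notin\cU(q)$, so $M$ has a $U_{2,q+2}$-minor; more generally, taking $\alpha$ large, sufficient density forces a projective geometry $\PG(h-1,s)$ as a minor for \emph{some} prime power $s$, with $h$ as large as we wish (one extracts first a projective plane, then a high-rank projective geometry over a fixed prime power, out of enough density). Let $s$ be the largest prime power admitting such a minor; aiming for a contradiction, suppose $s\le q$.

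Since $\epsilon(M)\ge\alpha q^{r(M)}\ge\alpha s^{r(M)}$ with $\alpha$ huge, $M$ is much denser than any GF$(s)$-representable matroid of its rank, so by Theorem~\ref{line} it has lines with more than $s+1$ points --- genuine non-GF$(s)$ structure. The plan is to combine this surplus with the large $\PG(h-1,s)$-minor to manufacture a $\PG(h'-1,s_1)$-minor with $s_1>s$ a prime power, contradicting maximality of $s$ and forcing $s>q$. Roundness is what makes the welding work: it lets one realize the GF$(s)$-geometry as a spanned restriction of a minor and attach a non-GF$(s)$ configuration so that the field jumps --- exactly the mechanism in Theorem~\ref{longline}, which already shows that one over-long line plus a large GF$(q)$-projective-geometry minor yields a $U_{2,q^2+1}$-minor, a GF$(q^2)$-line. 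Promoting this from lines to geometries of arbitrary rank is where essentially all the work is.

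Ruling out $s\le q$ leaves a $\PG(h-1,s)$-minor with $s>q$ a prime power; discarding down to $\PG(n-1,s)$ finishes, with $q'=s$. The function $\alpha(l,q,n)$ is then recovered by bookkeeping: each field jump above requires clearing a density threshold, there are at most as many jumps as there are prime powers in $(q,l]$, Theorem~\ref{line} keeps each threshold finite, and $\alpha$ must clear them all and also be large enough to make $r(M)$, hence the extracted geometry, at least as large as $n$ demands. The crux, and where I expect the bulk of the argument, is the third step: upgrading ``much denser than GF$(s)$, plus a large GF$(s)$-geometry minor'' to ``a GF$(s')$-geometry minor for some prime power $s'>s$'' --- the combinatorial core of the growth-rate theory for $\cU(l)$, which I would expect to require, in substantially generalized form, the connectivity and amalgamation ideas behind Theorem~\ref{longline}.
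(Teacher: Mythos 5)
The paper does not prove Theorem~\ref{density}; it imports it verbatim from Geelen--Kabell [\ref{gk}, Theorem~2.1] and uses it as a black box, so there is no internal proof here to compare your attempt against. Judged on its own terms, your sketch gets the outer scaffolding right --- contrapositive, reduction to the round case by passing to the denser side of a separation (in effect Lemma~\ref{jim1}), Theorem~\ref{line} to dispatch bounded rank --- but it has no content where the theorem has content. The pivotal assertion, ``taking $\alpha$ large, sufficient density forces a projective geometry $\PG(h-1,s)$ as a minor for \emph{some} prime power $s$, with $h$ as large as we wish,'' is not a step on the way to the theorem; it \emph{is} the theorem except for the conclusion $s>q$, and the one-clause justification you offer (``one extracts first a projective plane, then a high-rank projective geometry \dots out of enough density'') presupposes exactly the extraction that must be constructed.

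The ``field jump'' step is likewise only gestured at, and the precedent you cite for it is circular: Theorem~\ref{longline2} (the restatement of Theorem~\ref{longline}) is proved in this paper \emph{using} Theorem~\ref{density}, and Lemma~\ref{skewflat} also has its base case taken from [\ref{gk}], so neither can serve as independent evidence that such a jump is tractable. Even conceptually, Theorem~\ref{longline} only promotes a single line to a longer one; upgrading a rank-$h$ projective geometry over GF$(s)$ to one over a larger field is a far stronger claim, and you give no argument for it. There is also a small accounting slip: if you maximize $s$ and suppose $s\le q$, the required jumps traverse the prime powers from the first one that appears up through $q$, not the interval $(q,l]$ --- moot given the step is unproven, but it suggests the bookkeeping has not been worked out. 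In short, the two claims carrying all the weight --- that density in $\cU(l)$ forces a high-rank projective-geometry minor over \emph{some} prime power, and that the surplus over the GF$(q)$ bound forces that prime power above $q$ --- are precisely the substance of the Geelen--Kabell argument, and both are left here as bare assertions. This is an outline of a strategy, not a proof.
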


The following result is an important special case of Theorem~\ref{longline}.
\begin{lemma}\label{restriction2}
If $M$ is a round matroid that contains a $U_{2,q+2}$-restriction
and a PG$(2,q)$-restriction, then $M$ has a $U_{2,q^2+1}$-minor.
\end{lemma}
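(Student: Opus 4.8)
The plan is to reduce the lemma to finding, inside a minor of $M$, a $\PG(2,q)$-restriction together with an ``almost generic'' element of its span. Precisely, I claim the following suffices: \emph{there is a minor $M'$ of $M$ with a $\PG(2,q)$-restriction $P'$ and an element $z\in\cl_{M'}(E(P'))$ lying on at most one line of $P'$} (a ``line of $P'$'' meaning a rank-$2$ flat of $M'$ meeting $E(P')$ in $q+1$ elements). Granting this, since $z\in\cl_{M'}(E(P'))$ the restriction $(M'/z)|E(P')$ has rank $2$, and two distinct $a,b\in E(P')$ are parallel in $M'/z$ exactly when $z\in\cl_{M'}(\{a,b\})$, i.e.\ when $z$ lies on the line of $P'$ through $a$ and $b$. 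As $z$ lies on at most one such line, at most one parallel class of $(M'/z)|E(P')$ is nontrivial, and it has $q+1$ elements, so this rank-$2$ matroid has at least $(q^2+q+1)-q=q^2+1$ parallel classes. Deleting all but one element of each class produces a $U_{2,q^2+1}$-restriction of $M'/z$, hence a $U_{2,q^2+1}$-minor of $M$. (Note that on at least two lines $z$ would be parallel to their common point of $P'$, hence on all $q+1$ lines through it, leaving only $q+1$ classes; so ``at most one line'' is exactly the right threshold.)

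It remains to produce the pair $(P',z)$, and this is where all three hypotheses interact: the $\PG(2,q)$-restriction of $M$ supplies the plane $\Pi:=\cl_M(E(P))$, while the $U_{2,q+2}$-restriction $L$ and the roundness of $M$ must be used to manufacture $z$. The strategy is to ``rotate'' $L$ into $\Pi$: if $\cl_M(L)$ meets $\Pi$ in a point lying on at most one line of $P$ we are done, and if not, after contracting a point of $L$ one hopes a direction of $L$ drops into $\Pi$ in such a position. When $\cl_M(L)$ is skew to $\Pi$, one instead contracts a carefully chosen set so that in the resulting minor (round, since contraction preserves roundness, and still containing $P$ as a $\PG(2,q)$-restriction) the images of $L$ and $\Pi$ are no longer skew and meet in a single point in general position relative to the surviving copy of $\PG(2,q)$. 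The long line is indispensable here because $\PG(2,q)$ is $\mathrm{GF}(q)$-representable: in $M=\PG(2,q)$ itself — which is round and has the required plane — there is no $U_{2,q^2+1}$-minor, so without a $U_{2,q+2}$-restriction one cannot create a point of $\Pi$ off the lines of $P$.

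I expect this last step to be the main obstacle. The difficulty is that roundness only forces elements to exist \emph{outside} any flat of rank less than $r(M)$, not inside one: the rank-$5$ flat $\cl_M(\cl_M(L)\cup\Pi)$ may contain nothing beyond $E(L)\cup E(P)$, so one cannot simply contract a generic interior point of it. The real work therefore lies in a delicate sequence of contractions that decreases $r(M)$ while keeping $M$ round, keeping $P$ intact, and keeping $L$ long, and that nonetheless forces $\cl_M(E(P))$ to acquire a point in good position. I anticipate this uses either Theorem~\ref{density}-type density input or an induction on $r(M)$, and that it accounts for the bulk of the proof.
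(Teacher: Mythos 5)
Your reduction is essentially correct and is exactly what the paper uses: if some minor $M'$ has a $\PG(2,q)$-restriction $P'$ and an element $z\in\cl_{M'}(E(P'))\setminus E(P')$ lying on at most one line of $P'$, then $\si\bigl((M'/z)|E(P')\bigr)\cong U_{2,m}$ with $m\ge q^2+1$. The concluding parenthetical observation (two lines through $z$ force $z$ parallel to their intersection point) is also correct.

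However, the proposal stops short of the actual construction of $(P',z)$, and the difficulty you anticipate there is illusory; the paper handles it with a short induction on rank and two facts you did not exploit. First, roundness applies directly: as long as $r(M)>3$, the sets $\cl_M(L)$ (rank $2$) and $\cl_M(E(P))$ (rank $3$) cannot together cover $E(M)$, since a round matroid's ground set cannot be split into two sets of smaller rank. So there is always an element $e$ outside both closures; contracting $e$ keeps $M$ round and keeps both $L$ and $P$ intact as restrictions, so one simply descends to rank $3$. Second, at rank $3$, the long line $L$ has $q+2$ points while every line of $\PG(2,q)$ has only $q+1$, so some element $e\in L$ is not parallel to any point of $P$; this is the only place $U_{2,q+2}$ is needed, and it is not a delicate ``rotation'' but a counting fact. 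Third — and this is the step you flagged as the main obstacle — there is no need to engineer ``good position'': because $\PG(2,q)$ is \emph{modular}, any two of its lines meet, so an element of $\cl(P)\setminus E(P)$ lying on two long lines of $P$ would be parallel to their intersection point, contradicting $e\notin E(P)$. Hence $e$ automatically lies on at most one long line. No density theorem and no careful contraction scheme are required; the whole argument is the induction above plus modularity of the plane.
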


\begin{proof}
Suppose that $M$ is a minimum-rank counterexample.
Let $L,P\subseteq E(M)$ such that
$M|L=U_{2,q+2}$ and $M|P=\PG(2,q)$.
If $M$ has rank $3$, then we may assume that
$E(M) = P \cup \{e\}$.  Since $M|P$ is modular,
$e$ is in at most one long line of $M$. 
Then, since $|P|=q^2+q+1$, we have $\elem(M\con e) \ge q^2+1$ and,
hence, $M$ has a $U_{2,q^2+1}$-minor.
This contradiction implies that $r(M)>3$.
Since $M$ is round, there is an element $e$ that 
is spanned by neither $L$ nor $P$. 
Now $M\con e$ is round and contains both 
$M|L$ and $M|P$ as restrictions. This contradicts 
our choice of $M$.
\end{proof}

The base case of the following lemma is essentially proved in [\ref{gk},~Lemma~2.4]. 
\begin{lemma}\label{skewflat}
Let $\lambda \in \bR$. Let $k$ and $l\ge q \ge 2$ be positive integers,
and let $A$ and $B$ be disjoint
sets of elements in a matroid $M\in \cU(l)$ with $\sqcap_M(A,B)\le k$ and 
$\epsilon_M(A) > \lambda q^{r_M(A)}$. Then there is a set
$A'\subseteq A$ that is skew to $B$ and satisfies
$\epsilon_M(A') > \lambda l^{-k} q^{r_M(A')}$. 
\end{lemma}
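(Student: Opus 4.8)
The plan is to induct on $k$, with the case $k=1$ — essentially [\ref{gk}, Lemma~2.4] — as the base. I may assume $\lambda>0$ (for $\lambda\le 0$ either $A'=\emptyset$ works or we are in a degenerate case treated below) and $\sqcap_M(A,B)\ge1$ (otherwise $A'=A$ works). The engine is a single \emph{peeling step}: from $A$ with $\sqcap_M(A,B)=k\ge1$ and $\epsilon_M(A)>\lambda q^{r_M(A)}$, produce $A^*\subseteq A$ with $\sqcap_M(A^*,B)\le k-1$ and $\epsilon_M(A^*)>l^{-1}\lambda\,q^{r_M(A^*)}$. Granting this, the lemma follows: apply the peeling step and then the induction hypothesis to $(A^*,B)$ with $l^{-1}\lambda$ in place of $\lambda$, obtaining $A'\subseteq A^*\subseteq A$ skew to $B$ with $\epsilon_M(A')>l^{-1}\lambda\cdot l^{-(k-1)}q^{r_M(A')}=\lambda l^{-k}q^{r_M(A')}$; the base case $k=1$ is one peeling step.

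For the peeling step I would first move to a convenient position. I record the elementary fact (as a short preliminary lemma) that if $S\subseteq T$ is maximal subject to being skew to a set $W$, then $r_M(S)=r_M(T)-\sqcap_M(T,W)$ and $T\subseteq\cl_M(S\cup W)$. Applying it with $T=B$ and $W=A$ gives $Y\subseteq B$ with $r_M(Y)=r_M(B)-k$, skew to $A$, and $\cl_M(B)\subseteq\cl_M(A\cup Y)$. Contracting the flat $\cl_M(Y)$ keeps the matroid in $\cU(l)$ and, because $A$ and every $A'\subseteq A$ stay skew to $\cl_M(Y)$, leaves $r_M$ and $\epsilon_M$ of all subsets of $A$ unchanged while turning $\cl_M(B)$ into a flat $G$ of rank $k$ contained in $\cl_M(A)$ with $\sqcap_M(A,G)=k$; and the decomposition $\sqcap_M(A',\cl_M(B))=\sqcap_M(A',\cl_M(Y))+\sqcap_{M/\cl_M(Y)}(A',G)$ shows that a subset of $A$ skew to $G$ after the contraction is skew to $B$ before it. So I may assume $\cl_M(B)=G$ is a flat of rank $k$ with $G\subseteq\cl_M(A)$. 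Now fix a point $p$ of $M$ with $p\subseteq G$. For any hyperplane $H$ of $M|\cl_M(A)$ with $p\not\subseteq H$ one has $\cl_M(A\cap H)\subseteq H$, hence $p\not\subseteq\cl_M(A\cap H)$ and so $G\not\subseteq\cl_M(A\cap H)$; submodularity then gives $\sqcap_M(A\cap H,G)=r_M(A\cap H)+r_M(G)-r_M((A\cap H)\cup G)\le r_M(G)-1=k-1$. Since also $r_M(A\cap H)\le r_M(A)-1$, it suffices to find such an $H$ with $\epsilon_M(A\cap H)>\epsilon_M(A)/(lq)$, and then $A^*:=A\cap H$ completes the peeling step.

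The crux, which I expect to be the real obstacle and which is exactly [\ref{gk}, Lemma~2.4], is producing that hyperplane. I would partition the points of $M|A$ by the line of $M$ through $p$ on which they lie; since $M\in\cU(l)$ each such line carries at most $l$ points of $M|A$ besides $p$, so at least $(\epsilon_M(A)-1)/l$ lines through $p$ meet $A$ outside $p$, and picking one point of $M|A$ from each gives a set $A^*$ with at least $(\epsilon_M(A)-1)/l>\epsilon_M(A)/(lq)$ points (using $q\ge2$, once $\epsilon_M(A)$ is not tiny). The one real subtlety is to choose this transversal, or a sub-transversal, so as not to span $p$, so that its rank is at most $r_M(A)-1$; if no choice avoids spanning $p$ then there are at most $r_M(A)-1$ relevant lines, so $\epsilon_M(A)$ is small and one instead takes $A^*$ to be a basis of $M|A$ minus one suitably chosen element. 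Finally a handful of degenerate configurations ($r_M(A)\le1$, $\epsilon_M(A)\le2$, or $A\subseteq\cl_M(B)$) must be checked directly. Beyond the base case and this count, the argument is bookkeeping.
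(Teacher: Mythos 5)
Your high-level plan — induct on $k$, reducing $\sqcap_M(\cdot,B)$ by one at a cost of a factor $l^{-1}$ in density, and normalizing so that $\cl_M(B)$ becomes a rank-$k$ flat inside $\cl_M(A)$ — is exactly the structure of the paper's proof, and your inductive step from the $k=1$ case is also correct. The problem is the $k=1$ case itself, which is where the content lies.

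Your base case partitions the points of $A$ by the lines of $M$ through $p$, takes one representative per line, and tries to argue the resulting transversal can be chosen to not span $p$. The escape clause — ``if no choice avoids spanning $p$ then there are at most $r_M(A)-1$ relevant lines'' — is false. Take $M=U_{3,m+1}$ with $p$ one element and $A$ the other $m$: every line through $p$ is a $2$-element set $\{p,a_i\}$, so there are $m$ relevant lines and the unique full transversal is $A$ itself, which has rank $3$ and certainly spans $p$; yet $r_M(A)-1=2$, so the bound on the number of lines fails for any $m\ge 3$. More fundamentally, a transversal of lines through $p$ has no reason to lie inside a flat missing $p$; in the ``free cone'' situation (where $p$ is a coloop of $M^*$, so $p\in\cl_M(X)$ iff $X$ spans $M$) finding a large transversal skew to $p$ is equivalent to finding a dense hyperplane of $M/p$, which is the very statement being proved — the argument is circular there.

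The paper avoids this by a minimality trick you do not use. It assumes $A$ is \emph{minimal} with $\epsilon_M(A)>\lambda q^{r_M(A)}$, takes a corank-$2$ flat $W$ with $e\notin\cl_M(W)$, and lists the hyperplanes $H_0,\dotsc,H_m$ through $W$ (with $e\in H_0$ and $m\le l$ because $\si(M/W)$ is a line in $\cU(l)$). Minimality forces $\epsilon_M(A\cap H_0)\le\lambda q^{r(M)-1}$, so $A-H_0$ retains a $(q-1)/q$ fraction of the density, and since $H_1,\dotsc,H_m$ cover $A-H_0$, some $H_i$ contains at least a $1/l$ fraction; $A'=A\cap H_i$ lies inside a hyperplane avoiding $e$ and is therefore skew to $e$. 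The key ingredients you are missing are (i) taking $A$ minimal, which is what makes the hyperplane through $e$ sparse, and (ii) partitioning by the $\le l$ hyperplanes through a corank-$2$ flat rather than by the lines through $p$, which is what guarantees the chosen dense piece already sits in a flat skew to $e$. Without these, the peeling step does not go through.
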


\begin{proof}
By possibly contracting some elements in $B-\cl_M(A)$, we may assume that $A$ spans $B$ and thus that $r_M(B) = \sqcap_M(A,B)$. When $k = 1$, this means $B$ has rank $1$. We resolve this base case first. 

Let $e$ be a non-loop element of $B$. We may assume that $A$ is minimal with $\elem_M(A) > \lambda q^{r_M(A)}$, and that $E(M) = A \cup \{e\}$. Let $W$ be a flat of $M$ not containing $e$, such that $r_M(W) = r(M)-2$. Let $H_0, H_1, \dotsc, H_m$ be the hyperplanes of $M$ containing $W$, with $e \in H_0$. The sets $\{H_i - W: 1 \le i \le m\}$ are a disjoint cover of $E(M) - W$. Additionally, the matroid $\si(M \con W)$ is isomorphic to the line $U_{2,m+1}$, so we know that $m \le l$. 

	By the minimality of $A$, we get $\elem_M(H_0 \cap A) \le \lambda q^{r(M)-1}$, so \[\epsilon_M(A-H_0) > \lambda(q-1)q^{r(M)-1}.\] Since the hyperplanes $H_1, \dotsc, H_m$ cover $E(M)-H_0$, a majority argument gives some $1\le i \le m$ such that \[\epsilon_M(H_i \cap A) \ge \frac{1}{m}\epsilon_M(A - H_0) > \frac{\lambda}{l}(q-1)q^{r(M)-1}.\] Setting $A' = A \cap H_i$ gives a set of the required number of points that is skew to $e$ and therefore to $B$, which is what we want. 

Now suppose that the result holds for $k=t$ and consider the case that $k=t+1$. Let $A$ and $B$ be disjoint
sets of elements in a matroid $M$ with $\sqcap_M(A,B)\le t+1$ and  $\epsilon_M(A) > \lambda q^{r_M(A)}$.  As mentioned earlier, we have $r_M(B) = \sqcap_M(A,B) \le t+1$. Let $e$ be any non-loop element of $B$.
By the base case, there exists $A'\subseteq A$ that is skew to $\{e\}$ and satisfies
$\epsilon_M(A') > \lambda l^{-1} q^{r_M(A')}$.
Since $e\not\in \cl_M(A')$ and $r_M(B)\le t+1$, we have
$\sqcap_M(A',B)\le t$. Now the result follows routinely by the induction hypothesis.
\end{proof}

The following two results are used in the reduction
of Theorem~\ref{main} to Theorem~\ref{extra}.
\begin{lemma}\label{jim1}
Let $f(k)$ be an integer-valued function such that
$f(k)\ge  2 f(k-1) - 1$ for each $k\ge 1$ and $f(1)\ge 1$.
If $M$ is a matroid with $\elem(M)\ge f(r(M))$ and $r(M)\ge 1$, then
there is a round restriction $N$ of $M$ such that
$\elem(N)\ge f(r(N))$ and $r(N)\ge 1$.
\end{lemma}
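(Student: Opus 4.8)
The plan is to induct on $r(M)$, using the hypothesis $f(k)\ge 2f(k-1)-1$ as the engine that lets us descend one rank at a time. If $r(M)=1$ then $M$ itself is round (a rank-$1$ matroid cannot be split into two sets of smaller rank), and $\elem(M)\ge f(r(M))$ by hypothesis, so we may take $N=M$. Now suppose $r(M)\ge 2$. If $M$ is already round we are done with $N=M$, so assume $M$ is not round: there is a partition $E(M)=E_1\cup E_2$ with $r_M(E_i)<r(M)$ for $i=1,2$.

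The key step is a counting argument on this partition. Since $\elem(M)=\elem_M(E_1)+\elem_M(E_2)$ (the $E_i$ partition the points, as $M$ may be assumed simple — or at least the point counts add), we have $\elem_M(E_1)+\elem_M(E_2)\ge f(r(M))\ge 2f(r(M)-1)-1$, so by a majority argument one of the two restrictions, say $M|E_1$, satisfies $\elem_M(E_1)\ge f(r(M)-1)\ge f(r_M(E_1))$, where the last inequality uses $r_M(E_1)\le r(M)-1$ together with monotonicity of $f$ (which follows from $f(k)\ge 2f(k-1)-1\ge f(k-1)$ whenever $f(k-1)\ge 1$, and $f(1)\ge 1$ forces all values to be at least $1$ by a trivial induction). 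We also need $r_M(E_1)\ge 1$: if $r_M(E_1)=0$ then $E_1$ contains only loops, so $\elem_M(E_1)=0<1\le f(r(M)-1)$, a contradiction, hence $r_M(E_1)\ge 1$. Now apply the induction hypothesis to $M|E_1$, which has strictly smaller rank, to obtain the desired round restriction $N$.

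I do not expect a serious obstacle here; the only thing to be careful about is bookkeeping. Specifically: (i) confirming that $f$ is nondecreasing and everywhere at least $1$ from the stated recurrence and base value, so that $\elem_M(E_1)\ge f(r(M)-1)$ really gives $\elem_M(E_1)\ge f(r_M(E_1))$; (ii) ruling out the degenerate case $r_M(E_1)=0$; and (iii) making sure the point count is additive over the partition — since rounded-ness, point counts, and minors are all unaffected by adding or deleting loops and parallel elements, we lose nothing by assuming $M$ is simple at the outset. With those three points handled, the induction goes through cleanly. \QED
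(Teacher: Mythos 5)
Your proof is correct and follows essentially the same approach as the paper: partition a non-round $M$ into pieces of smaller rank, use $f(k)\ge 2f(k-1)-1$ together with the subadditivity of the point count over the partition to show one piece already satisfies the bound, and recurse. The paper phrases this as a contradiction (``inductively assume both pieces fail, then the total count is too small'') rather than as your explicit majority argument, but the two formulations are logically the same, and your extra remarks about monotonicity of $f$ and the degenerate rank-$0$ case simply make explicit what the paper leaves implicit.
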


\begin{proof}
We may assume that $M$ is not round and, hence,
there is a partition $(A,B)$ of $E(M)$ such that
$r_M(A)<r(M)$ and $r_M(B)<r(M)$.
Clearly $r_M(A)\ge 1$ and $r_M(B)\ge 1$.
Inductively we may assume that
$\elem_M(A) < f(r_M(A))$ and $\elem_M(B) <f(r_M(B))$.
Thus $\elem(M) \le
\elem(M|A)+\elem(M|B) \le f(r_M(A)) + f(r_M(B))-2 \le 2f(r(M)-1)-2 < f(r(M))$,
which is  a contradiction.
\end{proof}

\begin{lemma}\label{jim2}
Let $q\ge 4$ and $t\ge 1$ be integers and let $M$ 
be a matroid with $\elem(M)\ge \frac{q^{r(M)}-1}{q-1}$
and $r(M)\ge 3t$. If $M$ is not round,
then either $M$ has a $U_{2,q^2+2}$-minor or 
there is a round restriction $N$ of $M$ such that
$r(N)\ge t$ and $\elem(N)>\frac{q^{r(N)}-1}{q-1}$.
\end{lemma}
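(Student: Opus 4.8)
The plan is to proceed by induction on $r(M)$, peeling off rounds pieces from a non-round matroid and controlling how the point-count and rank change at each step. Since $M$ is not round, fix a partition $(A,B)$ of $E(M)$ with $r_M(A)<r(M)$ and $r_M(B)<r(M)$; by symmetry we may assume $r_M(A)\ge r_M(B)$. The inequality $\elem(M)\le \elem(M|A)+\elem(M|B)$ together with the hypothesis $\elem(M)\ge \frac{q^{r(M)}-1}{q-1}$ forces at least one side, say $M|A$, to be ``dense'': a short calculation with the geometric series should give $\elem(M|A)>\frac{q^{r_M(A)}-1}{q-1}$, using that $r_M(A)\ge r(M)/2$ and $q\ge 4$ to absorb the contribution of $M|B$ (here the bound $\elem(M|B)\le \frac{q^{r_M(B)}-1}{q-1}$ may be assumed unless $M|B$ itself already contains the desired round restriction or a $U_{2,q^2+2}$-minor, in which case we are done). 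If $r_M(A)\ge t$ is \emph{round}, we are finished; otherwise we recurse on $M|A$, which has smaller rank, so the induction is well-founded.

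The subtlety is that $M|A$ need not be round, so we cannot simply recurse and expect to preserve the hypothesis $r(N)\ge 3t$ needed to re-apply this very lemma. The clean way to handle this is to invoke Lemma~\ref{jim1} instead: set $f(k)=\frac{q^{k}-1}{q-1}$, which satisfies $f(k)\ge 2f(k-1)-1$ precisely because $q\ge 4$ (indeed $q^k-1\ge 2(q^{k-1}-1)$ iff $q^{k-1}(q-2)\ge 1$), and $f(1)=1\ge 1$. Applying Lemma~\ref{jim1} to $M|A$ produces a round restriction $N$ with $\elem(N)\ge f(r(N))$ and $r(N)\ge 1$. The remaining issue is then purely about the rank of $N$: Lemma~\ref{jim1} gives no lower bound on $r(N)$ beyond $1$, whereas we need $r(N)\ge t$ together with the \emph{strict} inequality $\elem(N)>\frac{q^{r(N)}-1}{q-1}$.

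To get both, I would not apply Lemma~\ref{jim1} directly to $M|A$ but rather to a matroid obtained from $M$ after a single contraction that boosts density strictly and keeps the rank large. Concretely: since $M$ is not round, choose the partition $(A,B)$ as above with $r_M(A)$ as large as possible; if $r_M(A)\ge 3t$ we simply recurse (the inductive statement of this lemma applies to $M|A$, which is smaller), so the real work is when $r_M(B)\ge r(M)-3t+1$, i.e. $M$ is ``almost round''. In that regime $r_M(A)\le r(M)-1$ but $r_M(A)$ is within a bounded amount of $r(M)$, so $r_M(A)\ge 3t$ as soon as $r(M)\ge 6t$ — thus the only genuinely hard case is the narrow band $3t\le r(M)<6t$, which one can dispatch by observing that $\elem(M)\ge \frac{q^{3t}-1}{q-1}$ is already so large (for $q\ge 4$) that after contracting a single element $M\con e$ still has $\elem(M\con e)>\frac{q^{r(M)-1}-1}{q-1}$, and then feeding $M\con e$ into Lemma~\ref{jim1}, using that the strict gap $\elem(M\con e)-\frac{q^{r(M\con e)}-1}{q-1}\ge 1$ propagates down through the round restriction since $f$ is integer-valued. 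The main obstacle, then, is the bookkeeping that converts ``$\elem(N)\ge f(r(N))$'' from Lemma~\ref{jim1} into the strict ``$\elem(N)>f(r(N))$'' while simultaneously ensuring $r(N)\ge t$; I expect this to require tracking a strict surplus of at least one point through the peeling process and using the extra slack that $q\ge 4$ (as opposed to $q\ge 2$) provides in the recursion $f(k)\ge 2f(k-1)-1$. The $U_{2,q^2+2}$-minor alternative is a safety valve: it is invoked only if at some stage one of the pieces $M|B$ is itself dense enough, relative to its rank, to be handled recursively but turns out on inspection to contain a long enough line, which by Theorem~\ref{line} (applied with $l=q^2$) cannot happen unless a $U_{2,q^2+2}$-minor is present.
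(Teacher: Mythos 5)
Your proposal identifies the right ingredients — apply Lemma~\ref{jim1} to peel off a round restriction and use Kung's Theorem~\ref{line} as a fallback — but misses the single idea that makes the paper's argument go through cleanly, and the patches you propose in its place do not work.

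The paper does not apply Lemma~\ref{jim1} with $f(k)=\frac{q^k-1}{q-1}$. It sets $s=r(M)$ and uses the \emph{weighted} function $f(k)=\left(\frac q2\right)^{s-k}\frac{q^k-1}{q-1}$. This choice does three things at once. First, $f(s)=\frac{q^s-1}{q-1}\le\elem(M)$, so Lemma~\ref{jim1} applies. Second, $f(k+1)>2f(k)$, comfortably above the required $2f(k)-1$. Third, and crucially, when the resulting round restriction $N$ has $r(N)<s$ (which it must, since $M$ is not round), the factor $(q/2)^{s-r(N)}>1$ makes $\elem(N)\ge f(r(N))>\frac{q^{r(N)}-1}{q-1}$ — the strict inequality you were struggling to manufacture is automatic. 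And if $r(N)<t$, then $(q/2)^{s-r(N)}\ge(q/2)^{2t}\ge q^t>q^{r(N)}$ (here $q\ge4$ and $s\ge3t$ are used), which after a short chain of inequalities gives $\elem(N)>\frac{q^{2r(N)}-1}{q^2-1}$, so Theorem~\ref{line} with $l=q^2$ produces a $U_{2,q^2+2}$-minor. No case split on $r(M)\ge 6t$, no ``narrow band,'' no contraction tricks.

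Your substitutes for this do not hold up. Applying Lemma~\ref{jim1} with the unweighted $f$ gives only $\elem(N)\ge\frac{q^{r(N)}-1}{q-1}$ with no strictness and no lower bound on $r(N)$, exactly as you note — but a single contraction does not ``boost density strictly'': contracting a non-loop $e$ can only lose points relative to $\frac{q^{r}-1}{q-1}$ (one has $\elem(M/e)\ge(\elem(M)-1)/(\text{max line length})$ at best, a degradation, not a gain). The claim that a one-point surplus ``propagates down through the round restriction'' in Lemma~\ref{jim1} is not justified by that lemma's proof, which splits $\elem(M)$ between $A$ and $B$ and could distribute any surplus arbitrarily. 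And the $U_{2,q^2+2}$-minor is not found by ``inspecting $M|B$ for a long line''; it comes from Theorem~\ref{line} applied to the already-extracted round restriction $N$ when its rank is small. The missing ingredient is the exponentially-weighted density function, and without it your proposal does not close.
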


\begin{proof}
Let $s=r(M)$ and
let $f(k) = \left(\frac q 2\right)^{s-k}\left(\frac{q^{k}-1}{q-1}\right)$.
For any $k\ge 1$,
\begin{align*}
f(k+1) & =  \left(\frac q 2\right)^{s-k-1}\left(\frac{q^{k+1}-1}{q-1}\right) \\
			 & >  \left(\frac q 2\right)^{s-k-1}\left(q \frac{q^{k}-1}{q-1} \right) \\
			 & =  2f(k).
\end{align*}
Moreover $f(1) \ge 1$ and $\elem(M)\ge f(r(M))$.
Then, by Lemma~\ref{jim1},
there is a round restriction $N$ of $M$ 
such that $r(N)\ge 1$ and $\elem(N)\ge f(r(N))$.
Since $M$ is not round, $r(N)<r(M)=s$ and, hence,
$\elem(N) > \frac{q^{r(N)}-1}{q-1}$.
We may assume that $r(N)<t$.
Therefore, since $s\ge 3t$ and $q\ge 4$,
\begin{eqnarray*}
\elem(N) & \ge &
f(r(N)) \\
&=&\left(\frac q 2\right)^{s-r(N)}\left(\frac{q^{r(N)}-1}{q-1}\right)\\
&\ge& \left(\frac q 2\right)^{2t}\left(\frac{q^{r(N)}-1}{q-1}\right)\\
&\ge&q^t\left(\frac{q^{r(N)}-1}{q-1}\right)\\
& > &q^{r(N)}\left(\frac{q^{r(N)}-1}{q-1}\right)\\
&\ge&\left(\frac{q^{r(N)}+1}{q+1}\right)\left(\frac{q^{r(N)}-1}{q-1}\right)\\
&=&\left(\frac{q^{2r(N)}-1}{q^2-1}\right).
\end{eqnarray*}
Therefore, by Theorem~\ref{line},
$M$ has a $U_{2,q^2+2}$-minor, as required.
\end{proof}
	
\section{The main results}

We start with a proof of Theorem~\ref{longline}, which we restate here.
\begin{theorem}\label{longline2}
There is an integer-valued function $n(q)$ such that, for each prime power $q$,
if $M$ is a round matroid that contains a $U_{2,q+2}$-restriction
and a PG$(n(q)-1,q)$-minor, then $M$ has a $U_{2,q^2+1}$-minor.
\end{theorem}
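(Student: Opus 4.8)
The plan is to reduce Theorem~\ref{longline2} to Lemma~\ref{restriction2}, arguing by contradiction. Suppose $M$ is round, has a $U_{2,q+2}$-restriction $L$, has a $\PG(n-1,q)$-minor for $n=n(q)$ to be chosen, but has no $U_{2,q^2+1}$-minor; then $M\in\cU(q^2-1)$. It suffices to produce a $\PG(2,q)$-restriction in some round minor of $M$ that still has a $U_{2,q+2}$-restriction: contractions of round matroids are round, and contracting a set skew to $L$ leaves $L$ a restriction, so Lemma~\ref{restriction2} applied to that minor gives the contradiction. Fix $C$ and $R\subseteq E(M)\del C$ with $(M\con C)|R\cong\PG(n-1,q)$.

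The first step is to normalise $C$. Contracting a maximal subset of $C$ that is skew to $L$ (which preserves roundness, $L$, and the geometry $(M\con C)|R$) forces the remaining part of $C$ into $\cl_M(L)$; thus I may assume $C\subseteq\cl_M(L)$, so $r_M(C)\le r_M(L)=2$, and taking $C$ minimal makes it independent, whence $|C|\le 2$. Since parallel pairs persist under contraction, $M|R$ is simple and $\elem_M(R)=\tfrac{q^n-1}{q-1}>q^{\,r_M(R)-3}$; moreover $\sqcap_M(R,C)=r_M(R)-r_{M\con C}(R)\le r_M(C)\le 2$ and $\sqcap_M(R,L)\le r_M(L)=2$, so $\sqcap_M(R,C\cup L)\le 4$.

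The second step is to cut down to a large skew subgeometry. After discarding the at most $q+1$ elements of $R\cap\cl_M(L)$, apply Lemma~\ref{skewflat} with $l=q^2-1$, $A=R$, $B=C\cup L$, $k\le 4$ and $\lambda=q^{-3}$ to get $R'\subseteq R$ skew to $C\cup L$ in $M$ with $\elem_M(R')>q^{-3}(q^2-1)^{-4}\,q^{\,r_M(R')}$. Since $R'$ is skew to $C$, $M|R'=(M\con C)|R'$ is a restriction of $\PG(n-1,q)$ of density bounded below by a constant depending only on $q$; choosing $n$ large enough forces $r_M(R')$ to be large. From such a high-rank, positive-density restriction of a projective geometry one extracts a $\PG(2,q)$-minor $(M|R')\con C'|S'\cong\PG(2,q)$ with $C',S'\subseteq R'$ (density alone only yields a minor, not a restriction, as affine geometries show). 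As $R'$, hence $C'$, is skew to $L$, the matroid $M\con C'$ is round, still has $L$ as a $U_{2,q+2}$-restriction, and has $(M\con C')|S'\cong\PG(2,q)$ as a restriction; Lemma~\ref{restriction2} then yields a $U_{2,q^2+1}$-minor of $M\con C'$, and so of $M$, the desired contradiction.

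The main obstacle is the heart of the second step. Lemma~\ref{skewflat} as stated gives only a density bound on $R'$, not a rank bound, so one needs either a rank-aware refinement of that extraction or a separate analysis of the degenerate case in which the geometry $R$ is collapsed almost entirely by its (bounded) local connectivity to $C\cup L$ --- concretely, the case where, after contracting $C$, essentially every plane of $\PG(n-1,q)$ acquires a fixed $e\in C$ in its $M$-closure. I expect this degenerate case to be where the real work lies, and it may be cleaner to dispatch it by a direct rank-$3$ argument modelled on the proof of Lemma~\ref{restriction2}, together with the step that a dense high-rank restriction of a projective geometry contains a $\PG(2,q)$-minor.
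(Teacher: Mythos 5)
Your high-level plan --- normalise $C$, carve out a large subgeometry skew to $C\cup L$ using Lemma~\ref{skewflat}, extract a $\PG(2,q)$-minor, then contract the skew part and apply Lemma~\ref{restriction2} --- matches the paper's proof, and you have correctly flagged where the difficulty lies. But the gap you identify in the second step is genuine, and the repairs you gesture at are not what is needed: the paper closes it by running the entire density computation with exponent base $q-1$ rather than $q$. Your application of Lemma~\ref{skewflat} gives $\epsilon_M(R') > \gamma\, q^{r_M(R')}$ with $\gamma = q^{-3}(q^2-1)^{-4} < 1$, and this is too weak on two counts. First, your claim that ``choosing $n$ large enough forces $r_M(R')$ to be large'' does not follow: since $M|R'$ embeds in $\PG(n-1,q)$ one also has $\epsilon_M(R') < q^{r_M(R')}/(q-1)$, and both the upper and lower bound are constant multiples of $q^{r_M(R')}$, so they carry no information about $r_M(R')$; Lemma~\ref{skewflat} could return a rank-one set. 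Second, a constant density below $1$ relative to $q^{r}$ cannot be fed into Theorem~\ref{density} with parameter $q$, and you supply no substitute lemma asserting that a ``positive-density'' GF$(q)$-restriction has a $\PG(2,q)$-minor.

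The missing observation is that $r_M(X\setminus D) \le n+2$ is bounded (once one forces $C\subseteq L$, so $|C|\le 2$) while $\epsilon_M(X\setminus D)$ is about $q^{n-1}$, so the ratio $\epsilon_M(X\setminus D)/(q-1)^{r_M(X\setminus D)}$ grows like $(q/(q-1))^n$ and can be made to exceed any prescribed constant by taking $n$ large. Applying Lemma~\ref{skewflat} with base $q-1$ and $k=2$ then produces a flat $F\subseteq X\setminus D$ skew to $L$ (hence skew to $C\subseteq L$, so $M|F = N|F$ is GF$(q)$-representable) satisfying $\epsilon(M|F) \ge \alpha(q^2-1,\,q-1,\,3)\,(q-1)^{r_M(F)}$. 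Theorem~\ref{density} now applies directly and delivers a $\PG(2,q')$-minor of $M|F$ for some prime power $q' > q-1$; GF$(q)$-representability of $M|F$ rules out $q' > q$, so $q' = q$. No rank lemma and no degenerate-case analysis is required. Once you substitute this base-$(q-1)$ computation for your base-$q$ one, the remainder of your argument goes through as written.
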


\begin{proof}
Recall that the function $\alpha(l,q,n)$ was defined in Theorem~\ref{density}.
Let $q$ be a prime power, let $\alpha = \alpha(q^2-1,q-1,3)$. 
Let $n$ be an integer that is sufficiently large
so that $\left(\frac{q}{q-1}\right)^n > \alpha q^{5} (q-1)^2$.
We define $n(q)=n$. Suppose that the result fails for this choice of $n(q)$
and let $M$ be a minimum-rank counterexample. Thus $M$ is a round 
matroid having a line $L$, with at least $q+2$ points, and a minor $N$
isomorphic to PG$(n-1,q)$, but $M \in \cU(q^2-1)$.

 Suppose that $N=M\con C\del D$ where $C$ is independent.
 If  $e\in C-L$, then $M\con e$ is round, contains the line $L$,
 and has $N$ as minor --- contrary to our choice of $M$.
 Therefore $C\subseteq L$ and, hence, $r(M)\le r(N)+2\le n+2$.
	 
Let $X=E(M)-L$. By our choice of $n$, we have $\elem(M|(X-D)) \ge  \frac{q^n-1}{q-1}-(q^2+1) 
= q^3 \frac{q^{n-3} -1}{q-1}+q\ge q^{n-1} >  q^4\alpha (q-1)^{n+2}
\ge q^4\alpha (q-1)^{r_M(X)}$.
By Lemma~\ref{skewflat}, there is a flat
$F\subseteq X-D$ of $M$ that is skew to $L$ and satisfies
$\elem(M|F) \ge \alpha (q-1)^{r_M(F)}$.
Since $F$ is skew to $L$, $F$ is also skew to $C$. Therefore $M|F=N|F$
and hence $M|F$ is GF$(q)$-representable. Then,
by Theorem~\ref{density}, $M|F$ has a 
PG$(2,q)$-minor. Therefore there is a set $Y\subseteq F$
such that $(M|F)\con Y$ contains a PG$(2,q)$-restriction.
Now $M\con Y$ is round, contains a $(q+2)$-point line, and
contains a PG$(2,q)$-restriction.
Then, by Lemma~\ref{restriction2}, $M$ has a $U_{2,q^2+1}$-minor.
\end{proof}

Now we will prove Theorem~\ref{extra} which we reformulate
here. The function $n(q)$ was defined in Theorem~\ref{longline2}.
\begin{theorem}\label{extra2}
For each prime power $q$, 
if $M$ is a round matroid with a PG$(n(q)-1,q)$-minor but no 
$U_{2,q^2+1}$-minor, then $\epsilon(M) \le \frac{q^{r(M)}-1}{q-1}$.
\end{theorem}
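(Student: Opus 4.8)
The plan is to induct on $r(M)$, with the engine being a careful accounting of points along a long line together with the structural leverage provided by Theorem~\ref{longline2}. Let $q$ be a prime power, set $n=n(q)$, and suppose $M$ is a round matroid with a $\PG(n-1,q)$-minor and no $U_{2,q^2+1}$-minor. The base cases, where $r(M)$ is bounded in terms of $n$ and $q$, should be handled either directly or by absorbing them into the hypothesis "sufficiently large rank" when we feed this back into Theorem~\ref{main}; the real work is the inductive step. Since $M$ has a $\PG(n-1,q)$-minor it certainly has a long line, and because it has no $U_{2,q^2+1}$-minor every long line of $M$ has between $3$ and $q^2$ points. We would like to say more: by Theorem~\ref{longline2}, a round matroid with a $\PG(n-1,q)$-minor and no $U_{2,q^2+1}$-minor cannot contain a $U_{2,q+2}$-restriction, so in fact \emph{every line of $M$ has at most $q+1$ points}. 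This is the crucial consequence of all the preceding reductions, and it is what pins the base field down to $\mathrm{GF}(q)$ in the counting.

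With that in hand, I would pick a point $e$ of $M$ and bound $\epsilon(M)$ in terms of $\epsilon(M\con e)$ via the lines through $e$. The lines through $e$ partition $E(M)\setminus\{e\}$, each such line has at most $q+1$ points hence at most $q$ points other than $e$, and the lines through $e$ correspond bijectively to the points of $M\con e$. Therefore $\epsilon(M) \le 1 + q\,\epsilon(M\con e)$. To close the induction we need $\epsilon(M\con e) \le \frac{q^{r(M)-1}-1}{q-1}$, which would follow from the inductive hypothesis \emph{provided} $M\con e$ is again round with a $\PG(n-1,q)$-minor and no $U_{2,q^2+1}$-minor. Roundness is preserved under contraction, and the absence of a $U_{2,q^2+1}$-minor is obviously preserved; the delicate point is keeping a $\PG(n-1,q)$-minor alive. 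Here roundness is exactly the right hypothesis: since $M$ is round, every element lies outside some pair of disjoint "small-rank" sets, and in particular one can choose the point $e$ so that $e$ is not spanned by (a witness for) the $\PG(n-1,q)$-minor, i.e.\ writing the minor as $M\con C\del D$ with $C$ independent, one argues as in the proof of Theorem~\ref{longline2} that $e$ can be taken outside $\cl_M(C)$, so that $M\con e$ still has the projective-geometry minor. Then $\epsilon(M)\le 1+q\cdot\frac{q^{r(M)-1}-1}{q-1} = \frac{q^{r(M)}-1}{q-1}$, completing the induction.

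The main obstacle is the last point of the previous paragraph: guaranteeing that \emph{some} choice of contracted point $e$ simultaneously (i) keeps a $\PG(n-1,q)$-minor and (ii) does not decrease the rank by more than one (automatic) while (iii) the resulting matroid still satisfies the hypotheses so the induction applies. Roundness does the heavy lifting for (i) — if contracting any single element outside $\cl_M(C)$ destroyed the geometry minor, one would need $C$ to span $M$ up to corank at most... which a roundness/connectivity argument rules out once $r(M)$ exceeds $n$ by enough — but one has to be careful that after contraction the matroid is simple enough for the "at most $q+1$ points per line" bound to keep biting; since we only ever need the \emph{number of points}, we may freely pass to $\si(M\con e)$, and simplification preserves roundness and the relevant minors. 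A secondary obstacle is the honest treatment of the small-rank base cases, where $\PG(n-1,q)$ itself forces $\epsilon(M)$ close to the bound and one must check the inequality is not violated by a constant; this is where the phrase "sufficiently large rank" in Theorem~\ref{main} earns its keep, and I would simply record the bound $r(M)\ge n(q)+c$ for a suitable constant $c$ and verify the base case $r(M)<n(q)+c$ is vacuous or immediate because such $M$ cannot have the required minor together with too many points without producing $U_{2,q^2+1}$ (via Theorem~\ref{line} applied with $l=q^2$, exactly as in Lemma~\ref{jim2}).
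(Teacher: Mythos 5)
Your overall strategy matches the paper's: take a minimum-rank counterexample (equivalently, induct on rank), use Theorem~\ref{longline2} to conclude that every line of $M$ has at most $q+1$ points, contract a point $e$ so that $\epsilon(M)\le 1+q\,\epsilon(M\con e)$, and close by induction. However, two of the details go wrong, and they are precisely the ones that make the argument work.

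First, your choice of contraction point is backwards. Writing the projective-geometry minor as $N=M\con C\del D$ with $C$ independent, the element you want is one \emph{in} $C$, not one \emph{outside} $\cl_M(C)$. If $e\in C$ then $M\con e$ visibly has $N$ as a minor (contract the rest of $C$, delete $D$), and $M\con e$ is round since roundness persists under contraction. By contrast, if $e\notin C\cup D$ and $e\notin\cl_M(C)$, then $e$ is a non-loop point of $N$ and contracting $e$ drops the rank of that minor to $n(q)-1$; and if $e\in D$ with $e\notin\cl_M(C)$, then contracting $e$ is not the same as deleting it and the rank-$n(q)$ geometry is again lost. Your appeal to ``as in the proof of Theorem~\ref{longline2}'' actually cuts against you: that proof takes $e\in C-L$, i.e.\ inside the contraction set.

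Second, taking $e\in C$ requires $C\neq\emptyset$, i.e.\ $r(M)>n(q)$, and that is exactly the base case. The paper dispatches it with Lemma~\ref{restriction2}: a rank-$n(q)$ instance has a $\PG(n(q)-1,q)$-restriction, and an extra point forces (via a rank-3 subconfiguration) a $U_{2,q^2+1}$-minor. Your proposal to absorb the base case into a ``sufficiently large rank'' hypothesis doesn't fit, because Theorem~\ref{extra2} as stated is unconditional in the rank. Your fallback, Theorem~\ref{line} with $l=q^2$, gives only $\epsilon(M)\le\frac{q^{2r(M)}-1}{q^2-1}$, which is far weaker than the target $\frac{q^{r(M)}-1}{q-1}$ and produces no contradiction. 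So the base case needs the Lemma~\ref{restriction2} argument, not a density bound.
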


\begin{proof} 
Let $M$ be a minimum-rank counterexample.
By Lemma~\ref{restriction2}, $r(M)>n(q)$.
Let $e\in E(M)$  be a non-loop element such that $M\con e$ has a
PG$(n-1,q)$-minor. Note that $M\con e$ is  round. Then, by the minimality of $M$,
$\elem(M\con e) \le  \frac{q^{r(M)-1}-1}{q-1}$.
By Theorem~\ref{longline2},
each line of $M$ containing $e$ has at most $q+1$ points.
Hence $\elem(M) \le 1 + q\elem(M\con e)
\le 1+q\left(\frac{q^{r(M)-1}-1}{q-1}\right) = \frac{q^{r(M)}-1}{q-1}$.
This contradiction completes the proof.
\end{proof}

We can now prove our main result, Theorem~\ref{main}, which we restate below. 
\begin{theorem}\label{main2}
Let $l\ge 2$ be a positive integer and let $q$ be the
largest prime power less than or equal to $l$.
If $M$ is a matroid with no $U_{2,l+2}$-minor
and with sufficiently large rank, then 
$\elem(M)\le \frac{q^{r(M)}-1}{q-1}$.
\end{theorem}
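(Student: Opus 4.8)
The plan is to peel off an easy case and then assemble Theorems~\ref{density} and~\ref{extra2} with Lemma~\ref{jim2}. If $l$ is a prime power, then $q=l$, and applying Theorem~\ref{line} to the simplification of $M$ already gives $\elem(M)\le\frac{l^{r(M)}-1}{l-1}=\frac{q^{r(M)}-1}{q-1}$ with no hypothesis on the rank, so from now on I would assume that $l$ is not a prime power; then $l\ge 6$ and $q\ge 5$. As noted in the introduction, $l<2q$, whence $l+2\le 2q+1\le q^2+1$, so every matroid with no $U_{2,l+2}$-minor has no $U_{2,q^2+1}$-minor and, a fortiori, no $U_{2,q^2+2}$-minor.

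Next I would pin down the rank threshold. Set $\alpha=\alpha(l,q-1,n(q))$ as in Theorem~\ref{density} (legitimate since $l\ge q-1\ge 2$), and pick an integer $t$ with $\bigl(\frac{q}{q-1}\bigr)^{t}>q\alpha$. The goal is to show that any matroid $M$ with no $U_{2,l+2}$-minor and $r(M)\ge 3t$ satisfies $\elem(M)\le\frac{q^{r(M)}-1}{q-1}$; suppose not, and let $M$ be such a counterexample, so $\elem(M)>\frac{q^{r(M)}-1}{q-1}$. If $M$ is not round, then since $q\ge 4$, $r(M)\ge 3t$ and $\elem(M)\ge\frac{q^{r(M)}-1}{q-1}$, Lemma~\ref{jim2} yields either a $U_{2,q^2+2}$-minor of $M$, which contradicts the first paragraph, or a round restriction $N$ of $M$ with $r(N)\ge t$ and $\elem(N)>\frac{q^{r(N)}-1}{q-1}$; since $N$ inherits the absence of a $U_{2,l+2}$-minor, I would replace $M$ by $N$. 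So I may assume $M$ is round with $r(M)\ge t$.

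Now I would produce a projective-geometry minor. Since $r(M)\ge t$,
\[
\elem(M)>\frac{q^{r(M)}-1}{q-1}>q^{r(M)-1}=\frac{1}{q}\Bigl(\frac{q}{q-1}\Bigr)^{r(M)}(q-1)^{r(M)}>\alpha(q-1)^{r(M)},
\]
so Theorem~\ref{density} supplies a $\PG(n(q)-1,q'')$-minor of $M$ for some prime power $q''>q-1$, i.e.\ $q''\ge q$. As $n(q)\ge 2$, that minor has a line with $q''+1$ points, so $M$ has a $U_{2,q''+1}$-minor, and since $M$ has no $U_{2,l+2}$-minor this forces $q''\le l$. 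Hence $q''$ is a prime power with $q\le q''\le l$, and maximality of $q$ gives $q''=q$, so $M$ has a $\PG(n(q)-1,q)$-minor. Being round, possessing that minor, and lacking a $U_{2,q^2+1}$-minor, $M$ satisfies $\elem(M)\le\frac{q^{r(M)}-1}{q-1}$ by Theorem~\ref{extra2}, contradicting the display.

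The step needing the most care is not deep but bookkeeping: choosing the thresholds so that the round restriction returned by Lemma~\ref{jim2} still has rank at least $t$, which is exactly what makes the density estimate in the display fire. The single genuinely load-bearing observation is that ``$q''\ge q$ and $q''\le l$'' forces $q''=q$: nothing in Theorem~\ref{density} itself prevents it from returning a projective geometry over a far larger field, and it is precisely the hypothesis $M\in\cU(l)$ --- no line on more than $l+1$ points --- together with the maximality of $q$ that pins the field down to GF$(q)$.
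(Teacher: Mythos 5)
Your proof is correct and follows essentially the same route as the paper: dispose of the prime-power case via Theorem~\ref{line}, use Lemma~\ref{jim2} to pass to a round restriction, apply Theorem~\ref{density} to extract a $\PG(n(q)-1,q)$-minor (with $q$ pinned down by maximality), and finish with Theorem~\ref{extra2}. You are, if anything, slightly more explicit than the paper in checking that the $U_{2,q^2+2}$-minor branch of Lemma~\ref{jim2} and the already-round case are genuinely excluded, but these are bookkeeping points rather than a different argument.
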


\begin{proof}[Proof of Theorem~\ref{main}]
When $l$ is a prime-power, the result follows from Theorem~\ref{line}.
Therefore we may assume that $l\ge 6$ and, hence, $q\ge 5$.
Recall that $n(q)$ is defined in Theorem~\ref{longline2}
and $\alpha(l,q-1,n)$ is defined in Theorem~\ref{density}.
Let $n=n(q)$ and let $k$ be an integer that is sufficiently large
so that $\left(\frac{q}{q-1}\right)^k \ge q\alpha(l,q-1,n)$.
Thus, for any $k' \ge k$, we get $\frac{q^{k'}-1}{q-1} \ge q^{k'-1} \ge \alpha(l,q-1,n) (q-1)^{k'}$.
Let $M\in\cU(l)$ be a matroid of rank at least $3 k$
such that $\elem(M)>\frac{q^{r(M)}-1}{q-1}$.
By Lemma~\ref{jim2}, $M$ has a round restriction $N$
such that we have $r(N)\ge k$ and $\elem(N) > \frac{q^{r(N)}-1}{q-1}\ge  \alpha(l,q-1,n) (q-1)^{r(N)}$.
By Theorem~\ref{density}, $N$ has a PG$(n(q)-1,q')$-minor for some $q' > q-1$. If $q' > q$, then $q' + 1 \ge l+2$, so this projective geometry has a $U_{2,l+2}$-minor, contradicting our hypothesis. We may therefore conclude that $q' = q$, so $N$ has a PG$(n(q)-1,q)$-minor.
Now we get a contradiction by Theorem~\ref{extra2}.
\end{proof}
	
\section{Extremal Matroids}
	
In this section, we prove that the extremal matroids of large rank
for Theorem~\ref{main} are projective geometries. We need the following
result to recognize projective geometries; see Oxley~[\ref{oxley}, Theorem 6.1.1]. 
\begin{lemma}\label{PGaxioms}
Let $M$ be a simple matroid of rank  $n\ge 4$ such that 
every line of $M$ contains at least three points and
each pair of disjoint lines of $M$ is skew.
Then $M$ is isomorphic to $\PG(n-1,q)$ for some prime power $q$. 
\end{lemma}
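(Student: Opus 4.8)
The plan is to recognise $M$, through its points and lines, as an abstract projective space of rank $n$ and then to invoke the Veblen--Young coordinatisation theorem, sharpening its conclusion with Wedderburn's little theorem. Regard $M$ as the point--line incidence structure whose points are the rank-$1$ flats of $M$ and whose lines are the rank-$2$ flats, with incidence given by containment. I will verify the usual projective-space axioms:
\begin{enumerate}[(i)]
\item any two distinct points lie on a unique line;
\item every line has at least three points;
\item \emph{(Veblen--Young axiom)} if $a,b,c,d$ are points such that the lines $\cl_M(\{a,b\})$ and $\cl_M(\{c,d\})$ have a common point, then $\cl_M(\{a,c\})$ and $\cl_M(\{b,d\})$ also have a common point;
\item \emph{(non-degeneracy)} there are at least two lines, and not all points lie on a single line.
\end{enumerate}
Axiom (i) is immediate, the unique line through distinct points $a,b$ being $\cl_M(\{a,b\})$ (uniqueness because two points span a rank-$2$ flat); axiom (ii) is a hypothesis of the lemma; and axiom (iv) holds because $M$ is simple of rank $n\ge 4$.

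The one axiom requiring real work is (iii), and I would deduce it straight from the assumption that disjoint lines are skew. First observe that \emph{any two distinct coplanar lines of $M$ meet in a point}: if distinct rank-$2$ flats $L_1,L_2$ both lie in a common rank-$3$ flat, then $r_M(L_1\cup L_2)\le 3$, so $\sqcap_M(L_1,L_2)\ge 1$; were $L_1$ and $L_2$ disjoint they would be skew, i.e.\ $\sqcap_M(L_1,L_2)=0$, a contradiction, so $L_1\cap L_2$ is a point. For axiom (iii), the hypothesis that $\cl_M(\{a,b\})$ and $\cl_M(\{c,d\})$ have a common point gives $\sqcap_M(\cl_M(\{a,b\}),\cl_M(\{c,d\}))\ge 1$ and hence $r_M(\{a,b,c,d\})\le 3$, so $a,b,c,d$ lie in a common plane; the coplanar lines $\cl_M(\{a,c\})$ and $\cl_M(\{b,d\})$ therefore meet by the observation (degenerate cases, where some of the four points coincide or are collinear, being trivial).

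With (i)--(iv) in hand, and using $n\ge 4$ so that the space has projective dimension at least $3$, the matroid form of the Veblen--Young theorem [\ref{oxley},~Theorem~6.1.1] gives $M\cong\PG(n-1,D)$ for a division ring $D$; since $E(M)$ is finite, $D$ is a finite division ring, hence $D\cong\mathrm{GF}(q)$ for a prime power $q$ by Wedderburn's little theorem, so $M\cong\PG(n-1,q)$. The main obstacle will be the bookkeeping around non-degeneracy: one must make sure the hypotheses really force an honest projective space and not a degenerate configuration --- in particular that each rank-$3$ flat is a genuine projective plane rather than, say, a near-pencil, which is exactly the role played by the assumption that every line has at least three points (without it, free matroids $U_{r,r}$ would also satisfy the remaining hypotheses). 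Everything beyond this reduces to the cited theorem together with Wedderburn's theorem.
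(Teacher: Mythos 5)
Your proof is correct and takes essentially the same route as the paper, whose entire justification is the citation to Oxley's Theorem 6.1.1 --- the very result you invoke after verifying the Veblen--Young axioms. Your added exposition merely unpacks why ``disjoint lines are skew'' is equivalent to ``coplanar lines intersect'' (the form in which Oxley states the hypothesis), and the Wedderburn step at the end is harmless but already built into Oxley's conclusion, which gives a prime power directly.
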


We can now prove our extremal characterization.
\begin{corollary}\label{extreme}
Let $l\ge 2$ be a positive integer and let $q$ be the
largest prime power less than or equal to $l$.
If $M$ is a simple matroid with no $U_{2,l+2}$-minor,
with $\elem(M)= \frac{q^{r(M)}-1}{q-1}$,
and with sufficiently large rank, then 
$M$ is a projective geometry over GF$(q)$.
\end{corollary}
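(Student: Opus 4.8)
The prime-power case of $l$ follows at once from the characterisation of the extremal matroids for Theorem~\ref{line} cited just after that theorem, so I would assume that $l$ is not a prime power; then $q\ge 5$ and, as noted in the introduction, $q<l<2q$. Fix once and for all a rank threshold large enough to run all of Theorems~\ref{main},~\ref{density},~\ref{longline2} and~\ref{extra2}, with one unit of slack. The overall strategy is to show that such an extremal $M$ meets the hypotheses of Lemma~\ref{PGaxioms} with the field of size $q$.

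The first step recovers the structural facts already implicit in the proof of Theorem~\ref{main}. Since $\epsilon(M)=\frac{q^{r(M)}-1}{q-1}\ge q^{r(M)-1}\ge\alpha(l,q-1,n(q))\,(q-1)^{r(M)}$, Theorem~\ref{density} gives $M$ a $\PG(n(q)-1,q')$-minor for some prime power $q'\ge q$; as $q'>q$ would force $q'\ge l+1$ and hence a $U_{2,l+2}$-minor, in fact $q'=q$. A similar argument via Lemma~\ref{jim2} shows that $M$ is round: a non-round $M$ of this density would either contain $U_{2,q^2+2}$ as a minor --- impossible since $q^2+2\ge l+2$ --- or have a dense round restriction contradicting Theorem~\ref{extra2}. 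Now $M$ is round with a $\PG(n(q)-1,q)$-minor, so Theorem~\ref{longline2} rules out a $U_{2,q+2}$-restriction; equivalently, every line of $M$ has at most $q+1$ points.

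The crux is a counting identity, applied to $M$ and to its simplified single-element contractions. For any element $e$, the lines of $M$ through $e$ biject with the points of $\si(M\con e)$, and a line through $e$ with $j$ points contributes $j-1\le q$ elements to $E(M)-\{e\}$; hence $\epsilon(M)-1\le q\,\epsilon(\si(M\con e))\le q\cdot\frac{q^{r(M)-1}-1}{q-1}$, the last inequality by Theorem~\ref{main}. Since $\epsilon(M)=\frac{q^{r(M)}-1}{q-1}$, equality holds throughout, so every line through $e$ has exactly $q+1$ points and $\si(M\con e)$ is again extremal, of rank $r(M)-1$, still above the threshold. Thus every line of $M$ has exactly $q+1$ points, and by the same argument so does every line of every $\si(M\con e)$. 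To verify the remaining hypothesis of Lemma~\ref{PGaxioms}, suppose $L_1,L_2$ are disjoint lines of $M$ that are not skew; then $\sqcap_M(L_1,L_2)=1$, so $P:=\cl_M(L_1\cup L_2)$ has rank $3$. Contracting a point $p\in P$ collapses the pencil of lines of $M$ through $p$ inside $P$ to a single line of $\si(M\con p)$, which therefore has exactly $q+1$ points; hence $P$ contains exactly $1+(q+1)q=q^2+q+1$ points, and any point of $P$ lies on exactly $q+1$ of its lines. A short incidence count --- fix a line $L$ of $M|P$, take a point off $L$, and count the lines of $M|P$ through it --- then shows that every two lines of $M|P$ meet, contradicting $L_1\cap L_2=\varnothing$. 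So disjoint lines of $M$ are skew, and Lemma~\ref{PGaxioms} gives $M\cong\PG(r(M)-1,q'')$ for some prime power $q''$; comparing line lengths forces $q''=q$.

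I do not expect a genuine obstacle here, only bookkeeping: the counting step is invoked for $M$, for $\si(M\con e)$, and for $\si(M\con p)$ inside a rank-$3$ flat, so the rank threshold must be chosen at the outset large enough that all of these matroids --- each of rank $r(M)$ or $r(M)-1$, each simple and in $\cU(l)$ --- still lie in the ranges where Theorems~\ref{main},~\ref{density},~\ref{longline2} and~\ref{extra2} apply. One should also note that a ``line of $M$ contained in the flat $P$'' is the same thing as a ``line of $M|P$'', which is immediate because $P$ is a flat of $M$.
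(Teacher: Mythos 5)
Your proposal is correct and follows the paper's outline for the first two-thirds (the prime-power reduction, roundness via Lemma~\ref{jim2}, extraction of a $\PG$-minor via Theorem~\ref{density}, exclusion of $(q{+}2)$-point lines via Theorem~\ref{longline2}, and the contraction count forcing every line to have exactly $q+1$ points). You diverge at the final step, where the two disjoint non-skew lines $L_1,L_2$ must be ruled out. The paper contracts a point $e\in L_1$, observes that $L_2$ then spans a line with at least $q+2$ points in $M\con e$, and invokes Theorem~\ref{longline2} once more --- for which it needs a $\PG(n(q)+1,q)$-minor in $M$ (rather than $\PG(n(q)-1,q)$) and a citation to~[\ref{ggw}, Lemma~5.2] to guarantee the $\PG$-minor survives the contraction. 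You instead argue locally inside the rank-$3$ flat $P=\cl_M(L_1\cup L_2)$: contracting a point $p\in P$ makes $P$ a $(q{+}1)$-point line of $\si(M\con p)$, hence $|P|=q^2+q+1$ and every point of $P$ lies on exactly $q+1$ lines of $P$; the standard linear-space count then shows any two lines of $M|P$ meet, contradicting $L_1\cap L_2=\varnothing$. This is a genuine simplification: it avoids the second use of Theorem~\ref{longline2}, the need for the enlarged $\PG$-minor, and the appeal to~[\ref{ggw}], replacing them with an elementary incidence-geometry argument. The one thing the paper gets for free that you must pay for with bookkeeping is re-applying the whole density/roundness/line-length argument to $\si(M\con p)$ of rank $r(M)-1$ --- which you do flag, and which works since roundness passes to contractions and the density hypothesis can be re-verified at rank $r(M)-1$ given one unit of slack in the threshold.
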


\begin{proof}
Kung~[\ref{kung}] proved the result for the case that $l$ is a prime-power.
Therefore we may assume that $l\ge 6$ and, hence, $q\ge 5$.
By Theorem~\ref{main}, there is an integer $k_1$
such that, if $M$ is a matroid with no $U_{2,l+2}$-minor
and with $r(M)\ge k_1$, then 
$\elem(M)\le \frac{q^{r(M)}-1}{q-1}$.
Recall that $n(q)$ is defined in Theorem~\ref{longline2} and
 $\alpha(l,q,n)$ is defined in Theorem~\ref{density}.
Let $k_2$ be large enough so that $\left(\frac{q}{q-1}\right)^{k_2} \ge q\alpha(l,q-1,n(q)+2)$, and $k=\max(k_1,k_2)$. 

Let $M\in\cU(l)$ be a simple matroid of rank at least $3 k$
such that $\elem(M)=\frac{q^{r(M)}-1}{q-1}$.
If $M$ is not round, then,
by Lemma~\ref{jim2}, $M$ has a round restriction $N$
such that $r(N)\ge k$ and $\elem(N)> \frac{q^{r(N)}-1}{q-1}$,
contrary to Theorem~\ref{main}. Hence $M$ is round.

From the definition of $k_2$, we get $\epsilon(M) \ge \alpha(l,q-1,n(q)+2)(q-1)^{r(M)}$, so by Theorem~\ref{density}, $M$ has a PG$(n(q)+1,q)$-minor.
Therefore,  by Theorem~\ref{longline2}, each line in $M$ has
at most $q+1$ points. Consider any element $e\in E(M)$.
By Theorem~\ref{main}, $\elem(M\con e)\le \frac{q^{r(M)-1}-1}{q-1}$.
Then
\begin{eqnarray*}
\elem(M) &\le& 1 + q \elem(M\con e)\\
&\le & 1+q\left(\frac{q^{r(M)-1} -1}{q-1}\right)\\
&=& \frac{q^{r(M)}-1}{q-1}\\
&=& \elem(M).
\end{eqnarray*}
The inequalities above must hold with equality. Therefore
each line in $M$ has exactly $q+1$ points.

If $M$ is not a projective geometry, then, by Lemma~\ref{PGaxioms},
there are two disjoint lines $L_1$ and $L_2$ in $M$ such that
$\sqcap_M(L_1,L_2)=1$. Let $e\in L_1$.
Then $L_2$ spans a line with at least  $q+2$ points in $M\con e$.
Since $M$ has a PG$(n(q)+1,q)$-minor, 
$M\con e$ contains a PG$(n(q)-1,q)$-minor;
see~[\ref{ggw}, Lemma 5.2]. 
This contradicts Theorem~\ref{longline2}.
\end{proof}

\section*{Acknowledgements}

We thank the referees for their careful reading of the manuscript and for their useful comments. 

\section*{References}

\newcounter{refs}

\begin{list}{[\arabic{refs}]}%
{\usecounter{refs}\setlength{\leftmargin}{10mm}\setlength{\itemsep}{0mm}}

\item\label{ggw}
J. Geelen, B. Gerards, G. Whittle,
On Rota's conjecture and excluded minors containing
large projective geometries,
J. Combin. Theory Ser. B 96 (2006), 405-425.

\item\label{bk}
J.E. Bonin,  J.P.S. Kung,  The number of points in a combinatorial geometry with no $8$-point-line minors,
Mathematical essays in honor of Gian-Carlo Rota, Cambridge, MA (1996), 271-284, 
Progr. Math., 161, Birkh\"auser Boston, Boston, MA, (1998).

\item\label{gk}
J. Geelen, K. Kabell,
Projective geometries in dense matroids, 
J. Combin. Theory Ser. B 99 (2009), 1-8.

\item\label{gkw}
J. Geelen, J.P.S. Kung, G. Whittle,
Growth rates of minor-closed classes of matroids,
J. Combin. Theory Ser. B 99 (2009), 420-427.

\item\label{kung}
J.P.S. Kung,
Extremal matroid theory, in: Graph Structure Theory (Seattle WA, 1991), 
Contemporary Mathematics, 147, American Mathematical Society, Providence RI, 1993, pp.~21--61.

\item \label{oxley}
J. G. Oxley,  Matroid Theory,
Oxford University Press, New York, 1992.

\end{list}

\end{document}